\newcommand{\myup}{\mathrm{up}}
\newcommand{\norm}[1]{\left \lVert #1 \right \rVert}
\newcommand{\delA}{\Delta A}
\newcommand{\ellDelta}{\tilde{\ell}}
\newcommand{\delQ}{\tilde{Q}}
\newcommand{\delX}{\tilde{X}}
\newcommand{\range}[1]{\mathrm{range}\left(#1\right)}
\newcommand{\sr}[1]{\mathrm{sr}\left(#1\right)}
\renewcommand{\tilde}[1]{\widetilde{#1}}
\title{Conditioning of Leverage Scores and Computation by QR Decomposition\thanks{The first author was supported in part by 
Department of Education Grant P200A090081. The second and third authors were supported 
in part by NSF grant CCF-1145383. The second author also
acknowledges the support from the XDATA Program of the Defense Advanced
Research Projects Agency (DARPA), administered through Air Force
Research Laboratory contract FA8750-12-C-0323 FA8750-12-C-0323.
All three authors performed part of the work with support from the National
Science Foundation under Grant DMS-1127914 to the Statistical and
Applied Mathematical Sciences Institute.}
}
\author{
John T. Holodnak\thanks{Department of Mathematics, North Carolina State University, P.O. Box 8205, Raleigh, NC 27695-8205, USA (\texttt{jtholodn@ncsu.edu}, \texttt{http://www4.ncsu.edu/{\char'176}jtholodn/})}
\and
Ilse C. F. Ipsen\thanks{%
Department of Mathematics, North Carolina State University, P.O. Box 8205,
Raleigh, NC 27695-8205, USA (\texttt{ipsen@ncsu.edu}, 
\texttt{http://www4.ncsu.edu/{\char'176}ipsen/})}
\and
Thomas Wentworth\thanks{%
Koch Institute, Massachusetts Institute of Technology, 77 Massachusetts Ave, Cambridge, 
MA 02139, USA (\texttt{tawentwo@mit.edu})}
}
\begin{document}
\maketitle

\begin{abstract} 
The leverage scores of a full-column rank matrix $A$ are the squared row norms of any orthonormal basis for $\range{A}$. We show that corresponding leverage scores of two matrices 
$A$ and $A+\delA$ are close in the relative sense, if they have large magnitude and if
all principal angles between the column spaces of $A$ and $A+\delA$ are small.

We also show three classes of bounds that are based on perturbation results of QR 
decompositions.
They demonstrate that relative differences between individual leverage scores strongly 
depend on the particular
type of perturbation $\delA$. The bounds imply that the relative accuracy of an 
individual leverage score depends on: its magnitude and the two-norm condition of $A$, 
if $\delA$ is a general perturbation; the two-norm condition number of $A$,
if $\delA$  is a perturbation
with the same norm-wise row-scaling as $A$;  (to first order) neither condition number 
nor leverage score magnitude, if $\delA$ is a component-wise row-scaled perturbation.
Numerical experiments confirm the qualitative and quantitative accuracy of our bounds.
\end{abstract}

\begin{keywords} 
principal angles, stable rank, condition number, row-scaling, component-wise perturbations
\end{keywords}

\begin{AM} 
65F25, 65F35, 62J20, 68W20, 15A12, 15A23
\end{AM}

\section{Introduction} 
Leverage scores are scalar quantities associated with the column space of a matrix, and 
can be computed from the rows of \textit{any} orthonormal basis for this space. 

\subsubsection*{Leverage scores}
We restrict  our discussion here to leverage scores of full column rank matrices.

\begin{definition}\label{d_lev}
Let $A$ be a real $m \times n$ matrix with $rank(A)=n$. If $Q$ is any $m\times n$ matrix 
whose columns form an orthonormal basis for $\range{A}$,  then the leverage scores of $A$ are
 $$\ell_j \equiv \norm{e_j^T Q}^2_2, \qquad 1\leq j \leq m.$$
 Here $e_j$ denotes the $j$th column of the $m\times m$ identity matrix,
 and $e_j^TQ$ denotes the $j$th row of $Q$.
\end{definition}

Note that leverage scores are independent of the orthonormal basis, since
$$
\norm{e_j^TQ}_2^2 = e_j^TQQ^Te_j = (QQ^T)_{jj}, \quad 1 \leq j \leq m
$$
and $QQ^T$ is the unique orthogonal projector onto $\range{A}$.

The basic  properties of leverage scores are 
$$0\leq \ell_j\leq 1,  \qquad 1\leq j\leq m, \qquad \mathrm{and} \qquad 
\sum_{j=1}^m{\ell_j}=n.$$
Hoaglin and Welsch  introduced statistical leverage scores in 1978 to detect outliers in regression problems \cite[Section 2]{HoagW78}, \cite[Section 5.1]{IpW13}, 
\cite[Section 2.2]{VelleW81}. About thirty years later, Mahoney, Drineas and their coauthors
started to advocate the use of leverage scores in randomized matrix algorithms \cite{DMM06a, DrineasMM06, Mah11}.
More specifically, leverage scores are the basis for importance sampling strategies, 
in the context of
low rank approximations \cite{DrineasMM06}, CUR decompositions
\cite{DrineasMM08}, subset selection \cite{BMD10}, Nystr\"{o}m approximations \cite{TalR10}, 
least squares problems \cite{DMM06a}, and matrix completion \cite{CanR09}, to name just a few.
Leverage scores also play a crucial role in the analysis of randomized algorithms \cite{IpW13}, and 
fast algorithms have been developed for their approximation \cite{DMMW2012,LMP2013,Magdon2010}.

\subsubsection*{Motivation}
Since leverage scores depend only on the column space, and are not tied to any particular 
orthonormal basis, the question is how to compute them. Many existing papers, among them
the survey monograph \cite[Definition 1]{Mah11},
define leverage scores as row norms of a thin left singular vector matrix.
However, the sensitivity of singular vectors is determined by the corresponding singular value gaps.

This, and the fact that QR decompositions, when implemented via Householder transformations
or Givens rotations, are numerically stable \cite[Sections 19.1--19.7]{Higham2002},
motivated us to investigate QR decompositions for the computation of leverage scores.  In this paper, we derive bounds on the difference between the leverage scores of a matrix $A$ and a perturbation $A+\delA$, when the leverage scores are computed from a QR decomposition.  Note that we do not assume a particular implementation of the QR decomposition and assume that quantities are computed in exact arithmetic.  We consider our results to be a first step towards determining whether computing leverage scores with a QR decomposition is numerically stable.  Since most of our bounds do not exploit the zero structure of the upper triangular factor, they can be readily extended to polar decompositions.

\subsection{Overview}
We present a short overview of the contents of the paper and the main results. 
For brevity, we display only the first order terms in the bounds, and omit the technical
assumptions.

\subsubsection*{Notation}
The $m\times m$ identity matrix is $I_m$, with columns $e_j$ and rows $e_j^T$,
$1\leq j\leq m$.

For a real $m\times n$ matrix $A$ with $\rank(A)=n$, the two-norm condition 
number with respect to 
left inversion is $\kappa_2(A)\equiv \|A\|_2\|A^{\dagger}\|_2$, where $A^{\dagger}$ is the
Moore-Penrose inverse. The stable rank is $\sr{A}\equiv \|A\|_F^2/\|A\|_2^2$, where
$\sr{A}\leq \rank(A)$.

We denote the leverage scores of a perturbed matrix $A+\delA$ by $\ellDelta_j$ and refer to the quantities $|\ellDelta_j-\ell_j|$ and $|\ellDelta_j-\ell_j|/\ell_j$ as the the absolute leverage score difference and relative leverage score difference, respectively.  We assume, tacitly, that relative leverage score difference bounds $|\ellDelta_j-\ell_j|/\ell_j$ apply only for $\ell_j>0$.

\subsubsection*{Conditioning of leverage scores (Section~\ref{s_angles})}
Before even thinking about computation, we need to determine the 
conditioning of individual leverage scores. To this end, let 
 $A$ and $A+\delA$ be real $m\times n$ matrices with $\rank(A)=\rank(A+\delA)=n$,
and leverage scores $\ell_j$ and $\ellDelta_j$, $1\leq j\leq m$, respectively.
We show (Corollary~\ref{c_1}) that the relative sensitivity of individual leverage scores 
to subspace rotations is determined by their magnitude.
That is,  if $\theta_n$ is the largest principal angle between $\range{A}$ and $\range{A+\delA}$,
then
$$\frac{|\ellDelta_j - \ell_j|}{\ell_j}  \leq 
2\>\sqrt{\frac{1-\ell_j}{\ell_j}}\>\sin{\theta_n} + \mathcal{O}\left((\sin{\theta_n})^2\right),
\qquad 1\leq j\leq m.$$
Thus, large leverage scores tend to be better conditioned, in the relative sense, to subspace
rotations than small leverage scores.

The same holds for general perturbations in the two-norm (Theorem~\ref{t_2}). 
If $\epsilon = \|\delA\|_2/\|A\|_2$ is the two-norm of the perturbation, then
$$\frac{|\ellDelta_j - \ell_j|}{\ell_j}  \leq 
2\>\sqrt{\frac{1-\ell_j}{\ell_j}}\>\kappa_2(A)\>\epsilon + \mathcal{O}(\epsilon^2), \qquad
1\leq j\leq m.$$
Therefore, all leverage scores are ill-conditioned under general norm-wise perturbations,
if $A$ is ill-conditioned with respect to inversion; in addition,
larger leverage scores are better conditioned than smaller ones.

A bound similar to the one above holds also for projected perturbations
$\epsilon^{\perp} =\|\mathcal{P}^{\perp}\,\delA\|_2/\|A\|_2$,
where $\mathcal{P}^{\perp}=I_m-AA^{\dagger}$ is the orthogonal projector onto the
orthogonal complement of $\range{A}$. The projected perturbations
remove the contribution of $\delA$ that lies in $\range{A}$. This is important when $\epsilon$
is large, but $\delA$ has only a small contribution in $\range{A}^{\perp}$.
Note that $\delA$ does not change the leverage scores if $\range{A+\delA}=\range{A}$.

\subsubsection*{Leverage scores computed with a QR decomposition (Section~\ref{s_qrpert})}
With the conditioning of individual leverage scores now established, we present perturbation bounds that represent the first 
step in assessing the numerical stability of the QR decomposition for computing leverage scores.

\paragraph{Section~\ref{s_norm}}
Our first result is a bound derived from existing QR perturbation results
that make no reference to a particular implementation. If
$\epsilon_F=\|\delA\|_F/\|A\|_F$ is the total mass of the perturbation, then
the leverage scores $\ellDelta_j$ computed from a QR decomposition of $A+\delA$ satisfy 
$$\frac{|\ellDelta_j - \ell_j|}{\ell_j}  \leq 12\>\sqrt{\frac{1-\ell_j}{\ell_j}} \>\sr{A}^{1/2}\>\kappa_2(A)\>
\epsilon_F +\mathcal{O}(\epsilon_F^2), \qquad 1\leq j\leq m.$$
Therefore, if $\delA$ is a general matrix perturbation, then leverage scores, computed from a QR
decomposition of $A+\delA$ are well-conditioned in the norm-wise  sense, provided they 
have large magnitude and $A$ is well-conditioned.

\paragraph{Section~\ref{s_row}}
The next bound is derived from scratch and does not rely on existing QR perturbation results.
Again, it makes no assumptions on the matrix perturbation
$\delA$, but is  able to recognize norm-wise row-scaling in $\delA$. If
$\epsilon_j= \|e_j^T\delA\|_2/\|e_j^TA\|_2$,  $1\leq j\leq m$, are norm-wise perturbations of the
rows of~$A$, then the leverage scores $\ellDelta_j$ computed
from a QR decomposition of $A + \delA$ satisfy
$$\frac{\left| \ellDelta_j - \ell_j \right|}{\ell_j} \leq 
2\>\left(\epsilon_j + \sqrt{2}\,\sr{A}^{1/2}\,\epsilon_F\right)\>\kappa_2(A) + 
\mathcal{O}(\epsilon_F^2), \qquad 1\leq j\leq m.$$
The perturbation $\epsilon_j$ represents the \textit{local} effect of $\delA$, because it indicates
how the $j$th relative leverage score difference depends on
the perturbation in row $j$ of~$A$.
In contrast, $\epsilon_F$, containing the total mass of the perturbation,
represents the \textit{global} effect on all leverage scores.

A similar bound holds for projected perturbations 
$\epsilon^{\perp}_F=\|\mathcal{P}^{\perp}\,A\|_F/\|A\|_F$ and
$\epsilon^{\perp}_j=\|e_j^T\mathcal{P}^{\perp}\,\delA\|_2/\|e_j^TA\|_2$,  $1\leq j\leq m$.

\paragraph{Section~\ref{s_comp}}
The natural follow up question is:
What if $\delA$ does indeed represent a row-scaling of $A$? Can we get tighter bounds?
The answer is yes.
If $|e_j^T \delA|\leq \eta_j\> |e_j^TA|$, $1\leq j\leq m$, with 
$\eta =\max_{1\leq j\leq m}{\eta_j}$,
are component-wise row-scaled perturbations, then the leverage scores $\ellDelta_j$ 
computed from a QR decomposition of $A+\delA$ satisfy
$$\frac{\left| \ellDelta_j - \ell_j \right|}{\ell_j} \leq 2\>\left( \eta_j
+ \sqrt{2}\,n\>\eta\right) + \mathcal{O}(\eta^2), \qquad 1\leq j\leq m.$$
Thus, under component-wise row-scaled perturbations,  
leverage scores computed with a QR decomposition have
relative leverage score differences that depend, to first order, neither on the condition number nor on
the magnitudes of the leverage scores. 

\subsubsection*{Numerical experiments (Sections~\ref{s_angles} and \ref{s_qrpert})}
After each of the bounds presented in Sections~\ref{s_angles} and \ref{s_qrpert}, we perform numerical experiments that illustrate that the 
bounds correctly capture the
relative leverage score differences under different types of perturbations.

\subsubsection*{Summary (Section~\ref{s_sum})}
We summarize the results in this paper and describe a few directions for future research.

\subsubsection*{Appendix (Section~\ref{s_proofs})}
We present the proofs for all results in Sections \ref{s_angles} and~\ref{s_qrpert}.

\section{Conditioning of leverage scores}\label{s_angles}
We determine the absolute and relative sensitivity of leverage scores to rotations 
of the column space
(Section~\ref{s_a1}), and to general matrix perturbations in the two-norm (Section~\ref{s_a2}).

\subsection{Principal angles}\label{s_a1}
We show that the leverage scores of two matrices are close in the absolute sense, if the all angles between their column spaces are small (Theorem~\ref{t_1}). Larger leverage scores 
tend to better conditioned in the relative sense (Corollary~\ref{c_1}).

Principal angles between two subspaces quantify the distance between the spaces in ``every dimension.''

\begin{definition}[Section 6.4.3 in \cite{GovL13}]\label{d_angles}
Let $A$ and $\delA$ be real $m \times n$ matrices with $\rank(A)=\rank(A + \delA)=n$,
and let $Q$ and $\delQ$ be orthonormal bases for $\range{A}$ and $\range{A+\delA}$,
respectively.  

Let $Q^T\delQ = U\Sigma V^T$ be a  SVD, where $U$ and $V$ are $n \times n$ 
orthogonal matrices, and 
$\Sigma=\diag\begin{pmatrix}\cos{\theta_1} & \cdots & \cos{\theta_n}\end{pmatrix}$ is a
$n\times n$ diagonal matrix with $1\geq \cos{\theta_1}\geq \cdots \geq \cos{\theta_n}\geq 0$.
Then,
$0 \leq \theta_1 \leq \cdots \leq \theta_n \leq \pi /2$ are the  \textit{principal angles} 
between the column spaces of $\range{A}$ and $\range{A + \delA}$. 
\end{definition}

Below we bound the absolute leverage score difference in terms of the largest and smallest 
principal angles. 
 
\begin{theorem}[Absolute leverage score difference]\label{t_1}
Let $A$ and $\delA$ be real $m \times n$ matrices, with $\rank(A)=\rank(A+\delA)=n$. Then,
$$|\ellDelta_j - \ell_j|  \leq 2\>\sqrt{\ell_j(1-\ell_j)}\>\cos{\theta_1}\sin{\theta_n} + (\sin{\theta_n})^2,
\qquad 1\leq j\leq m.$$
If, in addition $m=2n$, then also
$$1-\left(\sin{\theta_n}\>\sqrt{\ell_j}+\cos{\theta_1}\>\sqrt {1-\ell_j}\right)^2\leq \ellDelta_j
\leq \left(\cos{\theta_1}\>\sqrt{\ell_j}+\sin{\theta_n}\>\sqrt{1-\ell_j}\right)^2.$$
\end{theorem}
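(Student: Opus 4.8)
The key identity is that leverage scores are diagonal entries of orthogonal projectors: $\ell_j = (QQ^T)_{jj}$ and $\ellDelta_j = (\delQ\delQ^T)_{jj}$. So I would first write
$$\ellDelta_j - \ell_j = e_j^T(\delQ\delQ^T - QQ^T)e_j.$$
The strategy is to choose the orthonormal bases $Q$ and $\delQ$ cleverly so that $\delQ$ is expressed in terms of $Q$, a complementary orthonormal basis $Q_\perp$ for $\range{A}^\perp$, and the principal angles. This is the CS decomposition: there exist orthonormal $Q,Q_\perp$ and $\delQ$ such that, relative to the splitting $\mathbb{R}^m = \range{A}\oplus\range{A}^\perp$, one can write $\delQ = QC + Q_\perp S W$ for some $n\times n$ orthogonal $W$, where $C=\diag(\cos\theta_1,\dots,\cos\theta_n)$ and $S$ has the $\sin\theta_i$ on its diagonal (padded with zeros if $m>2n$). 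I would then expand $\delQ\delQ^T$ using this representation.

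\textbf{Carrying it out.} Write $u_j^T \equiv e_j^TQ$ (so $\ell_j = \|u_j\|_2^2$) and $v_j^T \equiv e_j^T Q_\perp$. From $\delQ = QC + Q_\perp SW$,
$$e_j^T\delQ\delQ^T e_j = u_j^TC^2u_j + u_j^TCW^TS^TQ_\perp^Te_j + e_j^TQ_\perp SW C u_j + v_j^TSS^Tv_j.$$
Since $C^2 = I - S^TS$ and, in the $m=2n$ case, $SS^T = I - C^2$ as well, the diagonal terms combine to give $\ell_j - u_j^TS^TSu_j + \|v_j\|_2^2 - v_j^T C^2 v_j$ (adjusting for the padding when $m>2n$). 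The cross terms are bounded using Cauchy--Schwarz: $|u_j^TCW^TS^TQ_\perp^Te_j| \le \|Cu_j\|_2\|S^TW\dots\|$, and the crude estimates $\|C\|_2\le\cos\theta_1$ (since $\cos\theta_1$ is the largest cosine — wait, I need $\|C\|_2 = \cos\theta_1$ which is correct as $\cos\theta_1$ is the largest), $\|S\|_2 \le \sin\theta_n$, $\|u_j\|_2 = \sqrt{\ell_j}$, and $\|v_j\|_2 \le \sqrt{1-\ell_j}$ (with equality when $m=2n$). This yields the cross-term bound $2\sqrt{\ell_j(1-\ell_j)}\cos\theta_1\sin\theta_n$ and the diagonal remainder is $O((\sin\theta_n)^2)$ after noting $|u_j^TS^TSu_j| \le \ell_j(\sin\theta_n)^2$, etc., giving the first inequality. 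For the second, in the $m=2n$ case the identities are exact: writing $\ellDelta_j = \|C u_j + (\text{stuff})\|_2^2$ as a genuine squared norm $\|e_j^T\delQ\|_2^2$, I apply the triangle inequality in $\mathbb{R}^n$ directly — $\|e_j^T\delQ\|_2 \le \|e_j^TQC\|_2 + \|e_j^TQ_\perp SW\|_2 \le \cos\theta_1\sqrt{\ell_j} + \sin\theta_n\sqrt{1-\ell_j}$ for the upper bound, and the reverse triangle inequality together with $\ellDelta_j + \text{(complementary leverage score)} = 1$ for the lower bound.

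\textbf{Main obstacle.} The delicate point is getting the CS-type decomposition $\delQ = QC + Q_\perp SW$ with the \emph{correct} matrices $C$ and $S$ tied to Definition~\ref{d_angles}, and handling the dimension bookkeeping when $m > 2n$ (where $Q_\perp$ is $m\times(m-n)$ and $S$ is rectangular, so $SS^T \ne I - C^2$ in general — only a principal submatrix behaves that way). I expect this structural setup, rather than the subsequent norm estimates, to be where the real work lies; once the representation is in hand, everything else is Cauchy--Schwarz and the triangle inequality. The $m=2n$ restriction in the second part is precisely what makes $S$ square and the relevant identities exact, which is why the sharper two-sided bound is only claimed there.
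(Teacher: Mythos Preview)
Your approach is essentially the paper's: rotate to the principal-vector bases, write $\delQ = Q\Sigma + Q_\perp Z$, expand $\ellDelta_j = e_j^T\delQ\delQ^Te_j$, and bound the cross and diagonal terms using $\|Z\|_2=\sin\theta_n$ and $\|e_j^TQ_\perp\|_2^2 = 1-\ell_j$ (the latter is always an equality, not just for $m=2n$). The paper sidesteps what you flag as the ``main obstacle'' by not invoking the full CS decomposition at all --- it simply sets $Z\equiv Q_\perp^T\delQ$ and reads off $Z^TZ=I-\Sigma^2$ from orthonormality of $\begin{pmatrix}\Sigma\\ Z\end{pmatrix}$, which works uniformly for all $m\ge n$; for the $m=2n$ lower bound it uses that $Z$ is square so $\lambda_{\min}(ZZ^T)=\lambda_{\min}(Z^TZ)=\sin^2\theta_1$, in place of your complementary-leverage-score argument.
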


\begin{proof}
See Section \ref{s_t1proof}.
\end{proof}

Theorem \ref{t_1} implies
that the leverage scores of $A$ and $A + \delA$ are close in the absolute sense, if the principal
angles between their column spaces are small.  Theorem~\ref{t_1} 
holds with equality if $A$ and $A + \delA$ have 
the same column space, because then the smallest angle $\theta_n$ is zero,  and so is the bound. 

In the special case $m=2n$, better bounds are possible because
$\range{A}$ and its orthogonal complement $\range{A}^{\perp}$
have the same dimension. In addition to implying
$\ellDelta_j=\ell_j$ for $\range{A}=\range{A+\delA}$,
Theorem~\ref{t_1} also implies  $\ellDelta_j=1-\ell_j$ for
$\range{A+\Delta A}=\range{A}^{\perp}$, $1\leq j\leq m$.

Next is a bound for the relative leverage score difference in terms of principal angles.

\begin{corollary}[Relative leverage score difference]\label{c_1}
Under the conditions of Theorem~\ref{t_1}, 
$$\frac{|\ellDelta_j - \ell_j|}{\ell_j}  \leq 
2\>\sqrt{\frac{1-\ell_j}{\ell_j}}\>\cos{\theta_1}\sin{\theta_n} + \frac{(\sin{\theta_n})^2}{\ell_j},
\qquad 1\leq j\leq m.$$
\end{corollary}

Corollary~\ref{c_1} implies that the relative sensitivity of leverage scores to rotations of
$\range{A}$ depends on the magnitude of the leverage scores. In particular, larger
leverage scores tend to be better conditioned. 

\subsubsection*{Numerical experiments: Figure~\ref{f_fig1}}
We illustrate the effect of subspace rotations 
on the relative leverage score differences $|\ellDelta_j-\ell_j|/\ell_j$.  We compute the leverage scores of a matrix $A$ and a perturbation $A+\delA$.
The matrix $A$ has dimension $1000\times 25$, $\kappa_2(A)=1$, and 
leverage scores that increase in four steps,  from $10^{-10}$ to about~$10^{-1}$,
see Figure~\ref{f_fig1}(a). It is generated with the Matlab commands
\begin{eqnarray}\label{e_A1}
 A1 &=& \diag\begin{pmatrix}I_{250} & 10^2\,I_{250} & 10^3\, I_{250} & 10^{4}\, I_{250}
 \end{pmatrix}\>\mathsf{randn(1000,25)} \\[0pt] 
[A,\sim] &=&\mathsf{qr}(A1,0). \nonumber
 \end{eqnarray}
The leverage scores of the perturbed matrix $A+\delA$ are computed with the 
MATLAB QR decomposition  $\mathsf{qr(A+\delA,0)}$.

The perturbations in the following sections are chosen so that they are large enough to dominate the round off errors.

\begin{figure}
\begin{center}
\includegraphics[width=5.5in]{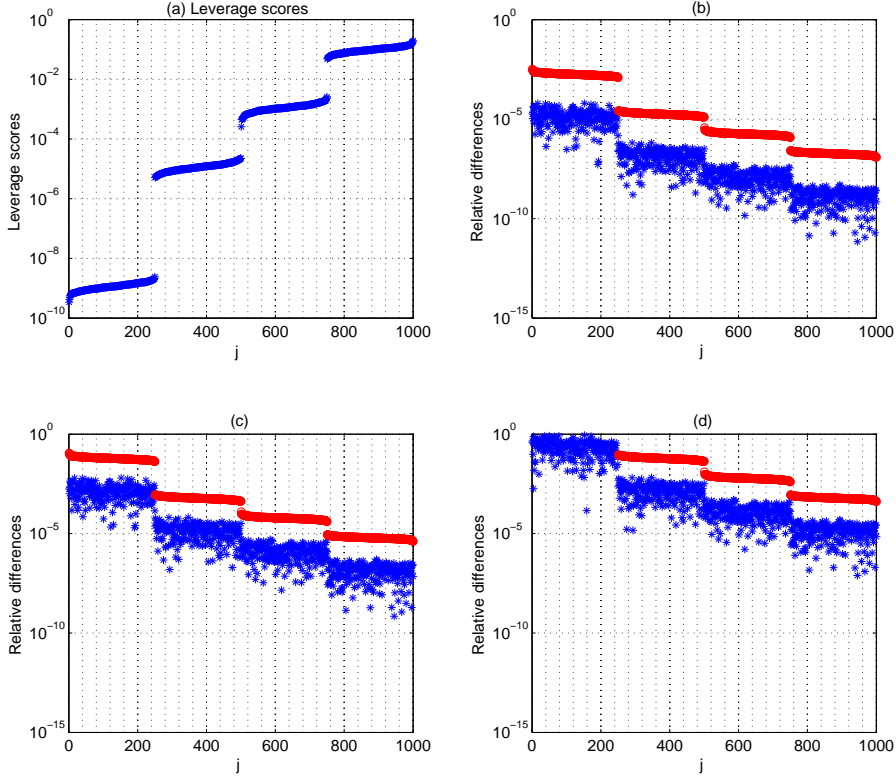}
\end{center}
\caption{(a) Leverage scores $\ell_j$ for matrices in (\ref{e_A1}); (b)--(d) relative leverage score differences 
$|\ellDelta_j-\ell_j|/\ell_j$ (blue stars) and bound for Corollary~\ref{c_1} 
(red line above the stars) vs index~$j$ for $\sin{\theta_n}=10^{-8}$ (b),
 $10^{-6}$ (c), and $10^{-4}$ (d). 
}\label{f_fig1}
\end{figure}

Figure~\ref{f_fig1}(b)--(d) shows the relative sensitivities and the bound from Corollary~\ref{c_1},
for perturbations due to increasing principal angles $\sin{\theta_n}\approx 10^{-8}, 10^{-6}, 10^{-4}$. The relative leverage score differences decrease with the same 
step size with which the leverage score magnitude increases. 

In Figure~\ref{f_fig1}(b), where $\sin{\theta_n}\approx 10^{-8}$, the relative 
leverage score differences decrease from $10^{-5}$ for the smallest leverage scores to about $10^{-9}$ for the largest
leverage scores. The differences are larger by a factor of 100 in Figure~\ref{f_fig1}(c), and again in 
Figure~\ref{f_fig1}(d), where the 250 smallest leverage scores have lost all  accuracy because
they are dominated by the perturbation $\sin{\theta_n}$.
Thus, the relative changes in leverage scores are proportional to $\sin{\theta_n}$.

Corollary~\ref{c_1} shows the same qualitative behavior
as the leverage score differences. The bound decreases with the leverage score magnitude, and
overestimates the worst case differences by a factor of about 100. Thus, 
Corollary~\ref{c_1} represents a realistic estimate for the relative
conditioning of the leverage scores to changes in principal angles.

\subsection{General matrix perturbations in the two-norm}\label{s_a2}
From the bounds for principal angles in Section~\ref{s_a1}, we derive 
bounds for the relative leverage score differences in terms of general perturbations in the two-norm,
$$\epsilon\equiv \frac{\|\delA\|_2}{\|A\|_2}, \qquad
\epsilon^{\perp}\equiv \frac{\|(I_m-AA^{\dagger})\,\delA\|_2}{\|A\|_2}.$$
The second perturbation removes the contribution of $\delA$ that lies in $\range{A}$. 
This is important when $\epsilon$ is large, but $\delA$ has only a small contribution in $\range{A}$.
Note that $\delA$ does not change the leverage scores if $\range{A+\delA}=\range{A}$.
  
\begin{theorem}\label{t_2}
 Let $A$ and $\delA$ be real $m \times n$ matrices, 
 with $\rank(A)=n$ and $\norm{\delA}_2\norm{A^\dagger}_2 \leq 1/2$. Then 
 $$\frac{|\ellDelta_j - \ell_j|}{\ell_j}  \leq 
4\>\left( \sqrt{\frac{1-\ell_j}{\ell_j}} + \frac{\kappa_2(A)}{\ell_j}\>\epsilon^{\perp}\right)\>
\kappa_2(A)\>\epsilon^{\perp},$$
and
 $$\frac{|\ellDelta_j - \ell_j|}{\ell_j}  \leq \left( 2\>\sqrt{\frac{1-\ell_j}{\ell_j}} + 
 \frac{\kappa_2(A)}{\ell_j}\>\epsilon\right)\>\kappa_2(A)\>\epsilon, \qquad 1\leq j\leq m.$$
\end{theorem}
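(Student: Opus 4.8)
The plan is to derive Theorem~\ref{t_2} from the principal-angle estimate of Corollary~\ref{c_1}, by bounding the largest principal angle $\theta_n$ between $\range{A}$ and $\range{A+\delA}$ in terms of $\epsilon$ and $\epsilon^{\perp}$. First I would extract what the hypothesis $\norm{\delA}_2\,\norm{A^{\dagger}}_2\le 1/2$ buys: since $\sigma_n(A+\delA)\ge \sigma_n(A)-\norm{\delA}_2\ge \tfrac{1}{2}\,\sigma_n(A)>0$, the perturbed matrix $A+\delA$ also has full column rank (so that $\ellDelta_j$, $(A+\delA)^{\dagger}$ and Corollary~\ref{c_1} all make sense), and moreover $\norm{(A+\delA)^{\dagger}}_2\le 2\,\norm{A^{\dagger}}_2$.

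Then I would set $P\equiv AA^{\dagger}$ and $\tilde P\equiv (A+\delA)(A+\delA)^{\dagger}$, the orthogonal projectors onto the two (equidimensional) column spaces, and record the identity $\sin\theta_n=\norm{(I_m-P)\tilde P}_2=\norm{(I_m-\tilde P)P}_2$, which follows from Definition~\ref{d_angles} since, with $Q^T\tilde Q=U\Sigma V^T$, one has $\norm{(I_m-P)\tilde P}_2^2=\norm{I_n-\tilde Q^TP\tilde Q}_2=\norm{I_n-V\Sigma^2V^T}_2=(\sin\theta_n)^2$ (and symmetrically for the other form). The two representations are then reduced using $(I_m-P)A=0$ and $(I_m-\tilde P)(A+\delA)=0$. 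For the projected perturbation, $(I_m-P)\tilde P=(I_m-P)(A+\delA)(A+\delA)^{\dagger}=(I_m-P)\,\delA\,(A+\delA)^{\dagger}$, so $\sin\theta_n\le \norm{(I_m-P)\delA}_2\,\norm{(A+\delA)^{\dagger}}_2\le 2\,\kappa_2(A)\,\epsilon^{\perp}$. For the general perturbation, I would instead use the \emph{other} identity with the roles of $A$ and $A+\delA$ reversed: $(I_m-\tilde P)P=(I_m-\tilde P)AA^{\dagger}=-(I_m-\tilde P)\,\delA\,A^{\dagger}$, whence $\sin\theta_n\le \norm{\delA}_2\,\norm{A^{\dagger}}_2=\kappa_2(A)\,\epsilon$, with no extra factor.

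Finally I would feed these into Corollary~\ref{c_1} after bounding $\cos\theta_1\le 1$, i.e.\ $\frac{|\ellDelta_j-\ell_j|}{\ell_j}\le 2\sqrt{\tfrac{1-\ell_j}{\ell_j}}\,\sin\theta_n+\frac{(\sin\theta_n)^2}{\ell_j}$: substituting $\sin\theta_n\le\kappa_2(A)\,\epsilon$ gives $\bigl(2\sqrt{\tfrac{1-\ell_j}{\ell_j}}+\tfrac{\kappa_2(A)}{\ell_j}\,\epsilon\bigr)\,\kappa_2(A)\,\epsilon$, and substituting $\sin\theta_n\le 2\,\kappa_2(A)\,\epsilon^{\perp}$ gives the $\epsilon^{\perp}$ bound after collecting terms. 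I expect the main obstacle to be pinning down the constants rather than the mechanics: one must notice the asymmetry that the $\epsilon$-bound can shed a spurious factor $2$ by swapping $A$ and $A+\delA$, whereas the $\epsilon^{\perp}$-bound cannot --- the swap would change the projector $I_m-P$ onto $\range{A}^{\perp}$ that defines $\epsilon^{\perp}$ --- and is therefore forced to route through $\norm{(A+\delA)^{\dagger}}_2$ and keep that factor. A second, minor care point is justifying $\sin\theta_n=\norm{(I_m-P)\tilde P}_2$, which relies on $\range{A}$ and $\range{A+\delA}$ having the same dimension $n$.
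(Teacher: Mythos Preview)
Your proposal is correct and follows essentially the same route as the paper: bound $\sin\theta_n$ via orthogonal projectors, using $(I_m-P)\tilde P=(I_m-P)\,\delA\,(A+\delA)^{\dagger}$ together with $\norm{(A+\delA)^{\dagger}}_2\le 2\norm{A^{\dagger}}_2$ for the $\epsilon^{\perp}$ bound, and $\sin\theta_n\le\kappa_2(A)\,\epsilon$ for the general bound, then plug into Corollary~\ref{c_1} with $\cos\theta_1\le 1$. The only cosmetic difference is that the paper cites Wedin for $\sin\theta_n\le\kappa_2(A)\,\epsilon$, whereas you derive it directly from the swapped form $(I_m-\tilde P)P=-(I_m-\tilde P)\,\delA\,A^{\dagger}$, which is exactly the underlying argument and nicely explains why no factor~$2$ appears there.
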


\begin{proof}
See Section \ref{s_t2proof}.
\end{proof}

Theorem \ref{t_2} implies that relative leverage score differences
are bounded by the condition number $\kappa_2(A)$, a norm wise
perturbation, and a function that depends on the size of the leverage scores. 
Thus, an individual leverage score is well conditioned, if it has large magnitude and
if $A$ is well-conditioned with respect to left inversion.

The first bound in Theorem~\ref{t_2} recognizes, through the use of 
the projected perturbation $\epsilon^{\perp}$,
when the column spaces of $A$ and $A+\delA$ are close. In particular, $\epsilon^{\perp}=0$
for  $\range{A}=\range{A+\delA}$. The second bound does not do this,
but has the advantage of being simpler.  Although the first bound  
contains a smaller perturbation, $\epsilon^{\perp}\leq \epsilon$, it also has an additional factor
of~2. Therefore it is not clear that, in general, the first bound is tighter than the second one.

\subsubsection*{Numerical experiments: Figure~\ref{f_fig2}}
We demonstrate that both bounds capture the 
qualitative behavior of the leverage score sensitivities, but that the 
first bound appears more accurate when 
the perturbation has a substantial contribution in $\range{A}$.

\begin{figure}
\begin{center}
\includegraphics[width=5in,height=3.5in]{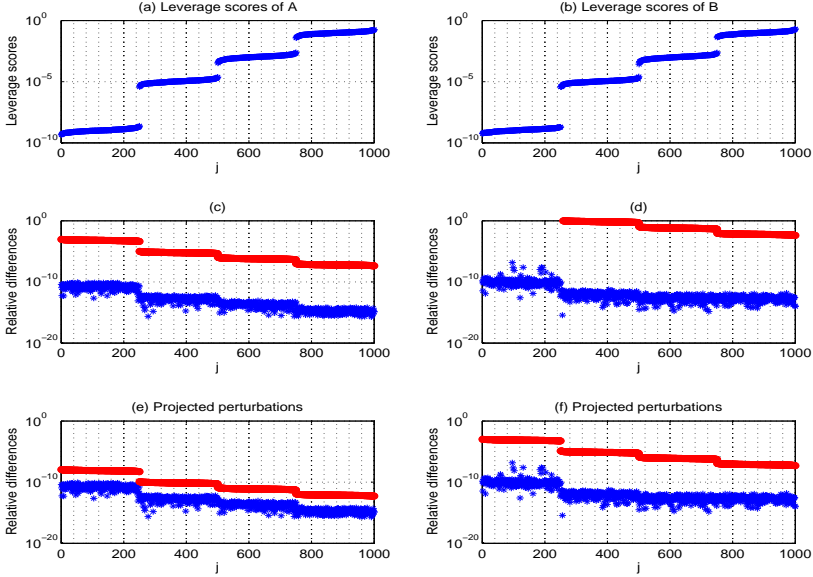}
\end{center}
\caption{(a)--(b) Leverage scores for matrices $A$ in \ref{e_A1} and $B$ in~\ref{e_B},
(c)--(f) relative leverage score differences $|\ellDelta_j-\ell_j|/\ell_j$ (blue stars) and 
bounds from Theorem~\ref{t_2} (red line above the stars) vs index~$j$, for 
perturbations $epsilon=10^{-8}$ in (c) and (d), and $epsilon^{\perp}\approx 10^{-14}$
in (e) and (f). }\label{f_fig2}
\end{figure}

Figure~\ref{f_fig2} shows the relative leverage score sensitivities and the bounds from
Theorem~\ref{t_2}, for perturbations $\epsilon=\|\delA\|_2/\|A\|_2$ and their projections
$\epsilon^{\perp}=\|(I-AA^{\dagger})\,\delA\|_2/\|A\|_2$.
Panels (a) and (b) in Figure~\ref{f_fig2} show the leverage scores  
for perfectly conditioned matrices $A$ constructed as in (\ref{e_A1}), and for matrices $B$
with $\kappa_2(B)\approx 10^{5}$ and leverage scores like those of $A$,
\begin{eqnarray}\label{e_B}
B=\diag\begin{pmatrix}I_{250} & 10^2\,I_{250} & 10^3\, I_{250} & 10^{4}\, I_{250}
 \end{pmatrix}\>\mathsf{gallery('randsvd', [m, n], 10^6 , 3)}.\quad
 \end{eqnarray}
Panels (c)--(f) in Figure~\ref{f_fig2} show the relative leverage score differences for $A$ and $B$
under two-norm perturbations $\epsilon=10^{-8}$.

Panels (c) and (e) show that the relative leverage score differences 
from the well-conditioned matrix $A$ 
reflect the leverage score distribution. That is, the smallest leverage scores
have relative differences of about $10^{-10}$, while the largest leverage scores have relative differences
that are several magnitude lower. 
The relative leverage score differences of the worse conditioned matrix $B$ in panels (d) and (f)
can be as high as $10^{-7}$, and do not follow the leverage distribution quite as clearly.
Therefore, the relative leverage score differences from norm wise perturbations increase with the
condition number of the matrix.

Panels (c) and (d) show the bound with $\epsilon$ in Theorem~\ref{t_2}, while 
(e) and (f) show the bound for the projected perturbation 
$\epsilon^{\perp}\approx 6\cdot 10^{-14}$ for both matrices.   Note that, as explained above, 
the bounds with the projected perturbations are not \textit{guaranteed} to be tighter,
although in this case they are several orders of magnitude more accurate.

\section{Leverage scores computed with  a QR decomposition}\label{s_qrpert}
We derive bounds for relative leverage score differences for leverage scores that are 
computed with a QR decomposition. 
The bounds assume exact arithmetic and  are based on perturbation results for QR decompositions;  they make no reference to particular QR implementations. 

Specifically, our bounds include: Norm-wise bounds for general matrix perturbations
(Section~\ref{s_norm}), bounds for general perturbations that recognize
row-scaling in the perturbations (Section~\ref{s_row}), and bounds for component-wise row-scaled
perturbations (Section~\ref{s_comp}). 
Since the bounds do not exploit the zero structure of the triangular
factor in the QR decomposition, they can be readily extended to  the polar decomposition as well. 

\subsection{General normwise perturbations}\label{s_norm}
The first bound is derived from a normwise perturbation result for QR 
decompositions \cite[Theorem 1.6]{Sun1991}.  Among the existing
and sometimes tighter QR perturbation bounds
\cite{BM1994,Chang2012,CP2001,CPS1997,CS2010,Stewart1977,Stewart1993,Sun1992,Sun1995,Zha1993}, we chose \cite[Theorem 1.6]{Sun1991} because it 
is simple and has the required key ingredients.

\begin{theorem}\label{t_4sun}
Let $A$ and $A+\delA$ be real $m \times n$ matrices with $\rank(A)=n$ and 
$\norm{\delA}_2\norm{A^\dagger} \leq 1/2$.
The leverage scores $\ellDelta_j$ computed from a QR decomposition of $A+\delA$ satisfy 
$$\frac{|\ellDelta_j - \ell_j|}{\ell_j}  \leq 12\>\left( \sqrt{\frac{1-\ell_j}{\ell_j}} + 
3 \,\frac{\kappa_2(A)\,\sr{A}^{1/2}}{\ell_j}\>\epsilon_F\right)\>\kappa_2(A)\>\sr{A}^{1/2}\>
\epsilon_F, \quad 1\leq j\leq m.$$
\end{theorem}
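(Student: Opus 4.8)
The plan is to bootstrap the bound from two ingredients already in hand: the off-the-shelf normwise QR perturbation bound of Sun~\cite[Theorem 1.6]{Sun1991}, and the principal-angle estimate of Corollary~\ref{c_1}. Routing through Sun's result, rather than the sharper direct estimate $\sin\theta_n\leq\norm{A^{\dagger}}_2\norm{\delA}_2$ that underlies Theorem~\ref{t_2}, is exactly the point of this section: it exhibits what generic QR perturbation theory delivers, and it yields a bound in terms of the Frobenius-norm perturbation $\epsilon_F$ (which is why $\sr{A}^{1/2}=\norm{A}_F/\norm{A}_2$ appears).

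First I would dispose of well-posedness: the hypothesis $\norm{\delA}_2\norm{A^{\dagger}}_2\leq 1/2$ forces $\sigma_n(A+\delA)\geq\sigma_n(A)-\norm{\delA}_2\geq\tfrac12\sigma_n(A)>0$, so $A+\delA$ has rank $n$, both $A=QR$ and $A+\delA=\hat{Q}\hat{R}$ admit thin QR decompositions with positive-diagonal triangular factors, and the rank hypotheses of Corollary~\ref{c_1} hold. Writing $\Delta Q\equiv\hat{Q}-Q$, I expect Sun's Theorem 1.6, under this hypothesis, to collapse to $\norm{\Delta Q}_F\leq 6\,\norm{A^{\dagger}}_2\norm{\delA}_F=6\,\kappa_2(A)\,\sr{A}^{1/2}\,\epsilon_F$, the last identity being just $\norm{A^{\dagger}}_2\norm{\delA}_F=(\norm{A}_2\norm{A^{\dagger}}_2)(\norm{A}_F/\norm{A}_2)(\norm{\delA}_F/\norm{A}_F)$. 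Next I would convert this to a bound on the largest principal angle $\theta_n$ between $\range{A}$ and $\range{A+\delA}$: since $\hat{Q}$ has orthonormal columns, the singular values of $(I_m-QQ^T)\hat{Q}$ are the $\sin\theta_i$, so $\sin\theta_n=\norm{(I_m-QQ^T)\hat{Q}}_2$, and because $QQ^TQ=Q$ this equals $\norm{(I_m-QQ^T)\Delta Q}_2\leq\norm{\Delta Q}_F\leq 6\,\kappa_2(A)\,\sr{A}^{1/2}\,\epsilon_F$. Finally, leverage scores are independent of the choice of orthonormal basis, so the $\ellDelta_j=\norm{e_j^T\hat{Q}}_2^2$ read off from the QR decomposition of $A+\delA$ are exactly its leverage scores, Corollary~\ref{c_1} applies, and bounding $\cos\theta_1\leq 1$ there and substituting the angle bound (with $K\equiv\kappa_2(A)\,\sr{A}^{1/2}\,\epsilon_F$) gives $|\ellDelta_j-\ell_j|/\ell_j\leq 2\sqrt{(1-\ell_j)/\ell_j}\,(6K)+(6K)^2/\ell_j=12(\sqrt{(1-\ell_j)/\ell_j}+3K/\ell_j)K$, which is the claim.

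The main obstacle I anticipate is not the chain of inequalities but pinning down the precise statement and constant of Sun's Theorem 1.6 --- in particular, checking that its $1/(1-\norm{A^{\dagger}}_2\norm{\delA}_2)$-type nonlinear factor is at most $2$ under our hypothesis, so that the effective constant is $6$ and the final constants $12$ and $3$ emerge exactly --- together with confirming that Sun's $\hat{Q}$ is genuinely the thin-$Q$ factor of $A+\delA$, so that $\ellDelta_j=\norm{e_j^T\hat{Q}}_2^2$ is the correct object. If one preferred to avoid Corollary~\ref{c_1}, one could instead expand $\ellDelta_j-\ell_j=2(Q^Te_j)^T(\Delta Q^Te_j)+\norm{\Delta Q^Te_j}_2^2$ and split $\Delta Q=QM+N$ with $M=Q^T\Delta Q$ and $N=(I_m-QQ^T)\Delta Q$; the near-skew-symmetry $M+M^T=-\Delta Q^T\Delta Q$ makes the in-range term second order, leaving $\sqrt{1-\ell_j}$ (from $\norm{(I_m-QQ^T)e_j}_2$) as the first-order coefficient --- but tracking the square term then costs an extra factor of two, so the Corollary~\ref{c_1} route is cleaner.
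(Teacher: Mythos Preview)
Your proposal is correct and follows essentially the same route as the paper: bound $\norm{\Delta Q}_F$ by Sun's Theorem~1.6 (which under $\norm{\delA}_2\norm{A^\dagger}_2\leq 1/2$ gives the constant $2(1+\sqrt{2})\leq 6$), then feed this into the principal-angle machinery to obtain the leverage-score bound. The only cosmetic difference is packaging: the paper applies Theorem~\ref{t_2} to the orthonormal pair $(Q,Q+\Delta Q)$ to get $|\ellDelta_j-\ell_j|/\ell_j\leq \bigl(2\sqrt{(1-\ell_j)/\ell_j}+\norm{\Delta Q}_2/\ell_j\bigr)\norm{\Delta Q}_2$, whereas you unpack that step and go straight to Corollary~\ref{c_1} via $\sin\theta_n=\norm{(I_m-QQ^T)\Delta Q}_2\leq\norm{\Delta Q}_F$ --- the resulting inequality and constants are identical.
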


\begin{proof}
See Section \ref{s_t4sunproof}.
\end{proof}

The perturbation bound in Theorem~\ref{t_4sun} sends the message that: If $\delA$ is a
 general perturbation, then leverage scores computed from a QR
decomposition of $A+\delA$, are well-conditioned in the norm-wise relative sense, if they 
have large magnitude and if $A$ is well-conditioned. We demonstrate that this conclusion is valid in the following experiment. 

\subsubsection*{Numerical experiments: Figure~\ref{f_fig3}}
For matrices $A$ in (\ref{e_A1}), Figure~\ref{f_fig3} shows the relative leverage score 
differences $|\ellDelta_j-\ell_j|/\ell_j$ from norm-wise perturbations 
$\epsilon_F=\|\delA\|_F/\|A\|_F$ and the bound from Theorem~\ref{t_4sun},
for two different perturbations: $\epsilon_F=10^{-8}$ and $\epsilon_F=10^{-5}$.  

\begin{figure}
\begin{center}
\includegraphics[width=5in,height=3.5in]{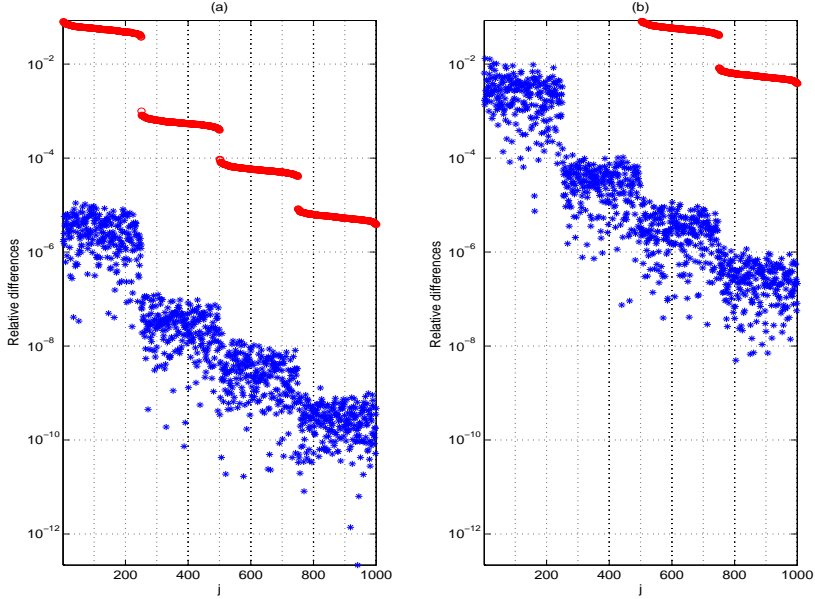}
\end{center}
\caption{Relative leverage score differences $|\ellDelta_j-\ell_j|/\ell_j$ (blue stars) and bound 
from Theorem~\ref{t_4sun} (red line above the stars)
vs index~$j$ for $\epsilon_F=10^{-8}$ (a) and $\epsilon_F=10^{-5}$ (b). 
}\label{f_fig3}
\end{figure}

Figure~\ref{f_fig3} illustrates that
the relative leverage score differences decrease with the same step size
with which the leverage score magnitude increases. In particular, for 
$\epsilon_F=10^{-8}$ in panel (a), the relative leverage score differences decrease from $10^{-5}$ 
for the smallest leverage scores to about $10^{-9}$ for the largest leverage scores. 
The differences for $\epsilon_F=10^{-5}$ in panel (b) are larger by a factor of 1000;
the 250 smallest leverage scores have lost all  accuracy because
they are smaller than the  perturbation $\epsilon_F$.

The bound in Theorem~\ref{t_4sun} differs from the actual differences by several
orders of magnitude, but reflects the qualitative behavior of the relative leverage score differences. 

\subsection{General normwise perturbation bounds that detect row scaling in the 
perturbations}\label{s_row}
The two first-order bounds presented here are based on a perturbation of the QR decomposition. 
Although the bounds make no assumptions on the perturbations $\delA$,
they are able to recognize row-scaling in $\delA$ of the form 
$$\epsilon_j\equiv \frac{\|e_j^T\delA\|_2}{\|e_j^TA\|_2}, \qquad 1\leq j\leq m.$$

\begin{theorem}\label{t_5}
Let $A$ and $A+\delA$ be real $m \times n$ matrices with $\rank(A)=n$  and 
$\norm{\delA}_2\norm{A^{\dagger}}_2 <1$.  The leverage scores $\ellDelta_j$ computed
from a QR decomposition of $A + \delA$ satisfy
\begin{equation*}
\frac{\left| \ellDelta_j - \ell_j \right|}{\ell_j} \leq 
2\>\left(\epsilon_j + \sqrt{2}\,\sr{A}^{1/2}\,\epsilon_F\right)\>\kappa_2(A) + 
\mathcal{O}(\epsilon_F^2), \qquad 1\leq j\leq m.
\end{equation*}
\end{theorem}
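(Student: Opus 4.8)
The plan is to pass from leverage scores to the thin QR factors and reduce the whole statement to a row-wise bound on the perturbation of the orthonormal factor. Write the (positive-diagonal) thin QR decompositions $A = QR$ and $A+\delA = \tilde{Q}\tilde{R}$; with this normalization $\tilde{Q}$ and $\tilde{R}$ depend continuously on $\delA$, and $\ell_j = \norm{e_j^T Q}_2^2$, $\ellDelta_j = \norm{e_j^T\tilde{Q}}_2^2$. Setting $\Delta Q \equiv \tilde{Q} - Q$ and expanding the square gives $\ellDelta_j - \ell_j = 2\,(e_j^T Q)(e_j^T\Delta Q)^T + \norm{e_j^T\Delta Q}_2^2$, whence by Cauchy--Schwarz
\[
\frac{|\ellDelta_j - \ell_j|}{\ell_j} \leq \frac{2\,\norm{e_j^T\Delta Q}_2}{\sqrt{\ell_j}} + \frac{\norm{e_j^T\Delta Q}_2^2}{\ell_j}.
\]
Thus it suffices to prove $\norm{e_j^T\Delta Q}_2 \leq \sqrt{\ell_j}\,\bigl(\epsilon_j + \sqrt{2}\,\sr{A}^{1/2}\epsilon_F\bigr)\,\kappa_2(A) + \mathcal{O}(\epsilon_F^2)$, since the last term above is then $\mathcal{O}(\epsilon_F^2)$.

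The second step splits $\Delta Q$ into a component in $\range{Q}$ and one orthogonal to it. From $\tilde{Q}\tilde{R} = QR + \delA$ I would write $\tilde{Q} = Q\,(R\tilde{R}^{-1}) + \delA\,\tilde{R}^{-1}$, hence $\Delta Q = Q\,S + \delA\,\tilde{R}^{-1}$ with $S \equiv R\tilde{R}^{-1} - I = -\delR\,\tilde{R}^{-1}$ and $\delR \equiv \tilde{R} - R$. Taking the $j$th row and using submultiplicativity, $\norm{e_j^T\Delta Q}_2 \leq \sqrt{\ell_j}\,\norm{S}_2 + \norm{e_j^T\delA}_2\,\norm{\tilde{R}^{-1}}_2$. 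For the last term, $\norm{e_j^T\delA}_2 = \epsilon_j\,\norm{e_j^T A}_2 \leq \epsilon_j\,\sqrt{\ell_j}\,\norm{A}_2$ (because $e_j^T A = (e_j^T Q)R$ and $\norm{R}_2 = \norm{A}_2$), while $\norm{\tilde{R}^{-1}}_2 = \norm{(A+\delA)^{\dagger}}_2 \leq \norm{A^{\dagger}}_2/\bigl(1 - \norm{\delA}_2\norm{A^{\dagger}}_2\bigr) = \norm{A^{\dagger}}_2\,(1 + \mathcal{O}(\epsilon_F))$ by the hypothesis $\norm{\delA}_2\norm{A^{\dagger}}_2 < 1$. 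This produces the contribution $\sqrt{\ell_j}\,\epsilon_j\,\kappa_2(A) + \mathcal{O}(\epsilon_F^2)$ and leaves only $\norm{S}_2$ to be bounded.

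The heart of the proof is the first-order estimate of $\norm{S}_2$, equivalently of $\norm{W}_2$ where $W \equiv \delR\,R^{-1}$ (since $S = -W + \mathcal{O}(\epsilon_F^2)$). Because $R$ and $\tilde{R}$ are both upper triangular with positive diagonal for small $\delA$, so is $\delR$, and therefore $W$ is upper triangular. Subtracting the Gram identities $\tilde{R}^T\tilde{R} = (A+\delA)^T(A+\delA)$ and $R^T R = A^T A$, inserting $A = QR$, dropping the quadratic terms, and multiplying on the left by $R^{-T}$ and on the right by $R^{-1}$ gives $W + W^T = Z + Z^T + \mathcal{O}(\epsilon_F^2)$ with $Z \equiv Q^T\delA\,R^{-1}$. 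An upper triangular matrix is uniquely recovered from the symmetric matrix $W + W^T$ by halving the diagonal and copying the strict upper part, and a one-line computation with squared Frobenius norms gives $\norm{W}_F \leq \tfrac{1}{\sqrt{2}}\norm{W + W^T}_F$; combined with $\norm{W + W^T}_F \leq 2\norm{Z}_F$ and $\norm{Z}_F \leq \norm{Q^T\delA}_F\,\norm{R^{-1}}_2 \leq \norm{\delA}_F\,\norm{A^{\dagger}}_2$, this yields $\norm{S}_2 \leq \sqrt{2}\,\norm{\delA}_F\,\norm{A^{\dagger}}_2 + \mathcal{O}(\epsilon_F^2)$. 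Rewriting $\norm{\delA}_F\norm{A^{\dagger}}_2 = \epsilon_F\,\norm{A}_F\norm{A^{\dagger}}_2 = \sr{A}^{1/2}\,\kappa_2(A)\,\epsilon_F$, adding the two contributions to $\norm{e_j^T\Delta Q}_2$, and substituting into the displayed inequality finishes the proof.

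I expect the main obstacle to be the constant $\sqrt{2}$ in the bound on $W = \delR R^{-1}$: the naive estimate $\norm{W + W^T}_F \le 2\norm{Z}_F$ together with a crude passage from $W + W^T$ to $W$ would only give a factor $2$, and it is the upper-triangular structure of $W$ --- forcing $W$ to be the ``half the diagonal, all the off-diagonal'' preimage of the symmetric matrix $Z + Z^T$ --- that recovers the $\tfrac{1}{\sqrt{2}}$ and hence the sharp $\sqrt{2}$. A secondary chore is verifying that every remainder is genuinely $\mathcal{O}(\epsilon_F^2)$ uniformly in $j$: this concerns the Neumann expansions of $\tilde{R}^{-1}$ and of $(I + R^{-1}\delR)^{-1}$, the discarded quadratic Gram terms, and the $\norm{e_j^T\Delta Q}_2^2/\ell_j$ term --- all of which are controlled precisely because $\norm{\delA}_2\norm{A^{\dagger}}_2 < 1$ keeps $A + \delA$ of full rank with $\tilde{R}$ boundedly invertible.
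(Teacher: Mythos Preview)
Your argument is correct and follows the same skeleton as the paper's proof: reduce to a bound on $\|e_j^T\Delta Q\|_2/\sqrt{\ell_j}$, split $\Delta Q$ into a $\delA\,R^{-1}$--type piece and a $Q$--times--(upper triangular) piece, and control the latter via the Gram identity together with the upper-triangular structure, which is precisely what produces the $\sqrt{2}$.

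The one methodological difference is how the first-order expression for $\Delta Q$ is obtained. The paper parametrizes by a smooth path $A(t)=A+(t/\epsilon_F)\,\delA$, invokes the smooth QR decomposition $A(t)=Q(t)R(t)$, Taylor-expands $Q(t)$ about $t=0$, and works with the exact identity $\dot{R}R^{-1}+(\dot{R}R^{-1})^T=\tfrac{1}{\epsilon_F}\bigl(Q^T\delA R^{-1}+(Q^T\delA R^{-1})^T\bigr)$; the $\sqrt{2}$ bound then comes from $\dot R R^{-1}=\myup(\cdots)$ and the cited inequality $\|\myup(Z+Z^T)\|_F\le\sqrt{2}\,\|Z\|_F$. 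You instead manipulate the finite increments directly, writing $\Delta Q=Q S+\delA\,\tilde R^{-1}$ with $S=-\delR\,\tilde R^{-1}$, and recover the same symmetric equation $W+W^T=Z+Z^T+\mathcal{O}(\epsilon_F^2)$ for $W=\delR R^{-1}$. Your route is a bit more elementary (no smooth-decomposition machinery), at the cost of having to check that the quadratic remainders in the Gram identity and in $\tilde R^{-1}=R^{-1}+\mathcal{O}(\epsilon_F)$ really are $\mathcal{O}(\epsilon_F^2)$; the paper's differential framing makes the key identity exact at $t=0$ and pushes all higher-order terms into a single Taylor remainder. Either way, the substance --- the splitting of $\Delta Q$, the Gram-equation trick, and the triangular $\sqrt{2}$ estimate --- is the same.
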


\begin{proof}
See Section~\ref{s_t5proof}.
\end{proof}

The relative leverage score difference bound for the $j$th leverage score in Theorem~\ref{t_5} contains three 
main ingredients: 
\begin{enumerate}
\item The two-norm condition number of $A$ with respect to left inversion, $\kappa_2(A)$.\\
It indicates leverage scores computed from matrices with smaller condition numbers have 
smaller relative leverage score differences.

\item The relative normwise perturbation in the $j$th row of $A$, $\epsilon_j$.\\
This perturbation represents the \textit{local} effect of $\delA$, because it shows
how the $j$th relative leverage score difference depends on the perturbation in row $j$ of~$A$.

\item The total normwise perturbation $\epsilon_F$.\\
This is the total relative mass of the perturbation, since 
$$\epsilon_F^2= \sum_{i=1}^m{\norm{e_i^T\delA}_2^2}/\norm{A}_F^2$$
represents the \textit{global} effect of $\delA$. 
\end{enumerate}

\subsubsection*{Numerical experiments: Figure~\ref{f_fig4}}
We illustrate that the local effect described above is real by examining the effect of row scaled perturbations on the relative accuracy of leverage 
scores computed with a QR decomposition.

\begin{figure}
\begin{center}
\includegraphics[width=5in,height=3.5in]{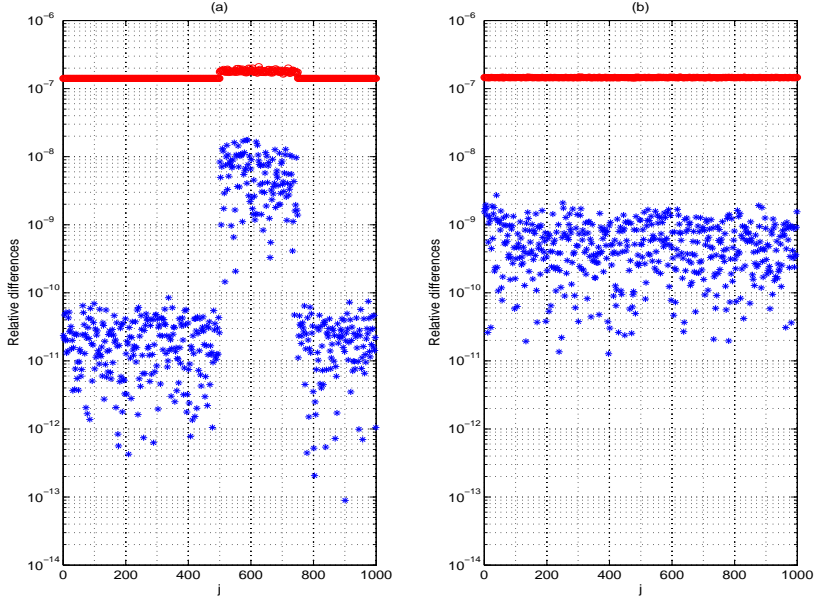}
\end{center}
\caption{Relative leverage score difference $|\ellDelta_j-\ell_j|/\ell_j$ (blue stars) and bound from Theorem~\ref{t_5}
(red line above the stars)
vs index~$j$ for row-wise scaled perturbations with $\epsilon_F=10^{-8}$.  In (a) only rows 501--750 of $A$ are perturbed, while in (b) the perturbation has the same row scaling 
as~$A$.}\label{f_fig4}
\end{figure}

Figure~\ref{f_fig4} shows the relative leverage score difference $|\ellDelta_j-\ell_j|/\ell_j$ from norm wise perturbations 
$\epsilon_F=\|\delA\|_F/\|A\|_F=10^{-8}$ and the bound from Theorem~\ref{t_5}. In
panel~(a), only rows 501--750 of $A$ are perturbed, while in panel~(b)
the perturbation has the same row scaling as $A$, that is,
$\delA=10^{-8} A1/\|A1\|_F$, where $A1$ is of the form (\ref{e_A1}).

In panel~(a), the leverage scores corresponding to rows 1--500 and 751-1000 have relative
 leverage score differences between $10^{-12}$ and~$10^{-10}$, which illustrates that the local 
 perturbation in rows 501--750 has
 a global effect on all leverage scores. However, the leverage scores corresponding to the
 perturbed rows 501--750 have larger relative differences of $10^{-8}$ or more, which illustrates the 
 strong effect of local perturbations. The bound from Theorem~\ref{t_5} hovers around $10^{-7}$, 
 but is slightly larger for the leverage scores corresponding to the perturbed rows. Thus, 
Theorem~\ref{t_5} is able to detect strongly local row scaling in norm wise perturbations.
 
In panel~(b), almost all leverage scores have relative differences between $10^{-10}$ and $10^{-9}$, and the bound from Theorem~\ref{t_5} is flat at~$10^{-7}$. Thus, the relative leverage
scores differences tend to be more uniform when the norm wise perturbations have the same
row scaling as the matrix.  This effect is recognized by Theorem~\ref{t_5}.

Therefore,  although Theorem~\ref{t_5} makes no assumptions about the 
perturbations $\delA$, it is able to detect row scaling in norm wise perturbations,
and correctly predicts the qualitative behavior of relative leverage score differences.

\subsubsection*{Projected perturbations}
The following bound is a refinement of Theorem \ref{t_5} that projects out the part of the perturbation that 
lies in $\range{A}$ and does not contribute to a change in leverage scores,
$$\epsilon^{\perp}_F\equiv \frac{\|(I_m-AA^{\dagger})\,\delA\|_F}{\|A\|_F},\qquad
\epsilon^{\perp}_j\equiv \frac{\|e_j^T(I-AA^{\dagger})\,\delA\|_2}{\|e_j^TA\|_2}, \qquad 1\leq j\leq m.$$

\begin{theorem}[Projected perturbations]\label{t_6}
Let $A$ and $A+\delA$ be real $m \times n$ matrices with $\rank(A)=n$ and
$\norm{\delA}_2\norm{A^\dagger}_2 \leq 1/2$.  The leverage scores $\ellDelta_j$ computed from
a QR decomposition of $A+\delA$ satisfy
\begin{equation*}
\frac{\left| \ellDelta_j - \ell_j \right|}{\ell_j} \leq 4\>\left( \epsilon^{\perp}_j
+ \sqrt{2}\,\sr{A}^{1/2}\,\epsilon^{\perp}_F \right)\kappa_2(A) + 
\mathcal{O}\left((\epsilon^{\perp}_F)^2\right), \qquad 1\leq j\leq m.
\end{equation*}
\end{theorem}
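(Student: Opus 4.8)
The plan is to mirror the proof of Theorem~\ref{t_5}, but replace the raw perturbation $\delA$ by its projected version $\mathcal{P}^{\perp}\delA = (I_m-AA^{\dagger})\,\delA$ everywhere the perturbation enters the QR analysis. The key observation is that the leverage scores depend only on $\range{A+\delA}$, and writing $\delA = AA^{\dagger}\delA + \mathcal{P}^{\perp}\delA$, the component $AA^{\dagger}\delA$ lies in $\range{A}$. So I would first argue that, up to first order, $\range{A+\delA}$ and the column space of $A + \mathcal{P}^{\perp}\delA$ agree — more precisely, that the leverage scores computed from a QR decomposition of $A+\delA$ differ from those of $A + \mathcal{P}^{\perp}\delA$ only by terms of order $(\epsilon_F^{\perp})^2$ (or are even exactly equal, if one is careful about how the in-range component shears the $R$ factor without moving the column space). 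This reduction lets me apply Theorem~\ref{t_5} with $\delA$ replaced by $\delW \equiv \mathcal{P}^{\perp}\delA$, whose row norms are exactly $\epsilon_j^{\perp}\|e_j^TA\|_2$ and whose Frobenius norm gives $\epsilon_F^{\perp}\|A\|_F$.

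Concretely, the steps in order would be: (i) Split $\delA = H + \delW$ with $H = AA^{\dagger}\delA \in \range{A}$ and $\delW = \mathcal{P}^{\perp}\delA$. (ii) Show $\range{A+\delA} = \range{(I+HA^{\dagger})(A + (I+HA^{\dagger})^{-1}\delW\cdots)}$ — i.e., factor out the in-range perturbation. The clean way: note $A + \delA = A + H + \delW$, and since $H\in\range{A}$ we may write $A+H = A(I + A^{\dagger}H')$ for a suitable small $n\times n$ matrix... actually more simply, $A+H = (I + H A^{\dagger})A$ up to the fact that $A^{\dagger}$ is a left inverse, so $\range{A+H}=\range{A}$ exactly when $A+H$ still has full column rank, which holds under $\|\delA\|_2\|A^{\dagger}\|_2\le 1/2$. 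Thus the effective perturbation seen by the column space is $\delW$ pre-multiplied by $(I+HA^{\dagger})^{-1}$, which is $I + \mathcal{O}(\epsilon_F)$, so to first order the effective perturbation is just $\delW$. (iii) Invoke Theorem~\ref{t_5} on the pair $A$, $A+\delW$ (equivalently the column space of $A+\delA$), picking up $2(\epsilon_j^{\perp} + \sqrt{2}\,\sr{A}^{1/2}\epsilon_F^{\perp})\kappa_2(A)$ to first order. (iv) The extra factor of $2$ in the statement (compared to Theorem~\ref{t_5}'s coefficient $2$) presumably comes from the $(I+HA^{\dagger})^{-1}$ correction bounded via $\|\delA\|_2\|A^{\dagger}\|_2\le 1/2$, giving $\|(I+HA^{\dagger})^{-1}\|_2 \le 2$, so each $\epsilon^{\perp}$ quantity is multiplied by at most $2$; fold this in and collect the higher-order terms into $\mathcal{O}((\epsilon_F^{\perp})^2)$.

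The main obstacle I anticipate is step (ii)–(iii): making rigorous the claim that passing from $A+\delA$ to $A+\delW$ changes the relative leverage score differences only by $\mathcal{O}((\epsilon_F^{\perp})^2)$, while simultaneously tracking that the row-wise quantities $\epsilon_j$ get replaced by $\epsilon_j^{\perp}$ and not by the row norms of $(I+HA^{\dagger})^{-1}\delW$ (which would reintroduce a dependence on all of $\delA$, not just its projection). The resolution is that $(I+HA^{\dagger})^{-1}\delW$ and $\delW$ differ by $\mathcal{O}(\epsilon_F\,\epsilon_F^{\perp})$ in every row, and since the theorem is only first-order in the \emph{projected} perturbation — but $\epsilon_F$ itself need not be small relative to $\epsilon_F^{\perp}$! — I would instead keep the exact factor $\|(I+HA^{\dagger})^{-1}\|_2\le 2$ as a multiplicative constant rather than expanding it, which is exactly why the final constant is $4$ rather than $2\sqrt{2}$ or so, and why $\epsilon_F$ does not appear in the bound at all. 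So the careful bookkeeping is: bound the effective row perturbation $\|e_j^T(I+HA^{\dagger})^{-1}\delW\|_2 \le 2\,\epsilon_j^{\perp}\|e_j^TA\|_2$ (using that $(I+HA^{\dagger})^{-1}$ acts on the right... wait, it acts on the left of $\delW$, mixing rows) — this row-mixing is the genuinely delicate point, and I would handle it by instead writing the column-space perturbation as $\delW$ post-composed with an $n\times n$ near-identity acting on the right, i.e. $A + \delA = (A+\delW)(I + (A+\delW)^{\dagger}H + \cdots)$, so the right factor does not touch row norms at all, and then $\range{A+\delA}=\range{A+\delW}$ up to $\mathcal{O}((\epsilon_F^{\perp})^2)$ with no row-mixing of $\delW$. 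With that framing, the result follows from Theorem~\ref{t_5} applied verbatim to $A+\delW$.

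\begin{proof}
See Section~\ref{s_t6proof}.
\end{proof}
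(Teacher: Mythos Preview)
Your decomposition $\delA = H + \delW$ with $H = AA^{\dagger}\delA$ and $\delW = \mathcal{P}^{\perp}\delA$ is exactly right, and so is the observation that $\range{A+H}=\range{A}$. The gap is in what you do next. You propose to apply Theorem~\ref{t_5} to the pair $(A,\,A+\delW)$ and then argue that the leverage scores of $A+\delW$ agree with those of $A+\delA$ to order $(\epsilon_F^{\perp})^2$. That last claim is false: the column spaces of $A+\delW$ and $A+\delA$ generally differ at first order in $\epsilon_F^{\perp}$. Take $m=2$, $n=1$, $A = (1,1)^T/\sqrt{2}$, $H = hA$ with $|h|\le 1/2$, and $\delW = c\,(1,-1)^T/\sqrt{2}$. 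Then $\ell_1=1/2$, the first leverage score of $A+\delW$ is $1/2 + c + \mathcal{O}(c^2)$, but the first leverage score of $A+\delA = (1+h)A+\delW$ is $1/2 + c/(1+h) + \mathcal{O}(c^2)$; these differ by $ch/(1+h)$, which is first order in $\epsilon_F^{\perp}=c$. Your right-factorization $A+\delA = (A+\delW)(I + (A+\delW)^{\dagger}H + \cdots)$ cannot repair this, because $H\in\range{A}$ but $H\notin\range{A+\delW}$ once $\delW\neq 0$, so $(A+\delW)(A+\delW)^{\dagger}H\neq H$ and the residual is $\mathcal{O}(\|H\|\,\epsilon_F^{\perp})$, not second order.

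The clean fix --- and this is what the paper does --- is to take $M\equiv A+H$ as the \emph{base} matrix rather than $A$. Since $\range{M}=\range{A}$, the leverage scores of $M$ are \emph{exactly} $\ell_j$, and $M+\delW = A+\delA$ \emph{exactly}, so there is nothing to reconcile. One then runs the differential QR analysis of Theorem~\ref{t_7} on $M(t)=M+\tfrac{t}{\mu}\delW$ with $\mu=\epsilon_F^{\perp}$, obtaining $\|e_j^T\Delta Q\|_2 \le (\|e_j^T\delW\|_2 + \sqrt{2}\sqrt{\ell_j}\,\|\delW\|_F)\,\|M^{\dagger}\|_2 + \mathcal{O}(\mu^2)$. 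The row norm $\|e_j^T\delW\|_2 = \epsilon_j^{\perp}\|e_j^TA\|_2 \le \epsilon_j^{\perp}\sqrt{\ell_j}\,\|A\|_2$ is already in $A$-terms; the only place $M$ survives is $\|M^{\dagger}\|_2$, which is bounded by $2\|A^{\dagger}\|_2$ via singular-value perturbation and the hypothesis $\|\delA\|_2\|A^{\dagger}\|_2\le 1/2$. This factor of~$2$ is the source of the constant~$4$ in the statement --- it does not come from an operator $(I+HA^{\dagger})^{-1}$ acting on $\delW$, which (as you correctly worried) would mix rows.
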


\begin{proof}
See Section \ref{s_t6proof}.
\end{proof}

It is not clear that Theorem~\ref{t_6} is  tighter than Theorem~\ref{t_5}. First, Theorem~\ref{t_6}
contains an additional factor of~2 in the bound. Second,
although the total projected perturbation is smaller, i.e. $\epsilon^{\perp}_F\leq \epsilon_F$,
this is not necessarily true for $\epsilon^{\perp}_j$ and $\epsilon_j$.
For instance, if
$$A = \tfrac{1}{2}\>\begin{pmatrix} 1 & 1 \\ 1 & -1 \\ 1 & 1 \\ 1 & -1 \end{pmatrix}, \qquad
\delA = \begin{pmatrix} 1 & 1 \\ 0 & 0 \\ 0 & 0 \\ 0 & 0 \end{pmatrix},$$
then 
$$(I-AA^{\dagger})\,\delA= (I-AA^T)\,\delA = 
\tfrac{1}{2}\>\begin{pmatrix} 1 & 1 \\ 0 & 0 \\ 1 & 1 \\ 0 & 0 \end{pmatrix}.$$
Here we have
$\epsilon_3=\norm{e_3^T\delA}_2 /\norm{e_3^TA}_2= 0$ and 
$\epsilon^{\perp}_3=\norm{e_3^T(I-AA^{\dagger})\,\delA}_2/\norm{e_3^TA}_2 = 1$, so that
$\epsilon^{\perp}_3>\epsilon_3$.

\subsection{Componentwise row-scaled perturbations}\label{s_comp}
Motivated by Section~\ref{s_row}, where bounds for general
perturbations $\delA$ can recognize row scaling in $\delA$., we ask the natural 
follow-up question:
What if $\delA$ does indeed represent a row scaling of $A$? Can we get tighter bounds?
To this end, we consider componentwise row perturbations of the form  $|e_j^T \delA|\leq \eta_j\> |e_j^TA|$,
where $\eta_j\geq 0$, $1\leq j\leq m$, and model them as 
\begin{eqnarray}\label{e_pertrow}
e_j^T\delA  = \zeta_j\, \eta_j\>e_j^TA, \qquad 1\leq j\leq m, \qquad
\eta \equiv  \max_{1\leq j\leq m}{\eta_j},
\end{eqnarray} 
where $\zeta_j$ are uniform random variables in $[-1, 1]$, $1\leq j\leq m$.
We show that, under component wise row-scaled perturbations (\ref{e_pertrow}),
leverage scores computed with a QR decomposition have
relative differences that do not depend, to first order, on the condition number or
the magnitudes of the leverage scores. 

\begin{theorem}\label{t_8}
Let $A$ be a real $m\times n$ matrix with $\rank(A)=n$, and let the perturbations $\delA$ be
of the form (\ref{e_pertrow}) with $\eta \,\kappa_2(A)< 1$. The leverage scores $\ellDelta_j$ 
computed from a QR decomposition of $A+\delA$ satisfy
\begin{eqnarray*}
\frac{\left| \ellDelta_j - \ell_j \right|}{\ell_j} \leq 2\>\left( \eta_j
+ \sqrt{2}\,n\>\eta\right) + \mathcal{O}(\eta^2), \qquad 1\leq j\leq m.
\end{eqnarray*}
\end{theorem}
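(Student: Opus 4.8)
The plan is to exploit the structure of~(\ref{e_pertrow}), which says exactly that $A+\delA = D\,A$ with $D \equiv I_m + Z$ and $Z$ the diagonal matrix whose $j$th diagonal entry is $\zeta_j\eta_j$. Thus $\norm{Z}_2 \le \eta$, $|D_{jj}-1| \le \eta_j$, and $\norm{D^2-I_m}_2 \le 2\eta+\eta^2$. The hypothesis $\eta\,\kappa_2(A)<1$ forces $\eta<1$ (because $\kappa_2(A)\ge 1$), so $D$ is nonsingular and $\rank(A+\delA)=\rank(DA)=n$; the argument below uses only $\eta<1$, so the $\kappa_2(A)$ in the hypothesis is merely slack, or present for compatibility with the earlier results.

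First I would write $\ellDelta_j$ in closed form. Let $A=QR$ be a thin QR decomposition, set $q_j^T\equiv e_j^TQ$ (so $\norm{q_j}_2^2=\ell_j$), and orthonormalize $DQ=\delQ\,\tilde R$; then $DA=\delQ(\tilde R R)$ is a QR decomposition of $A+\delA$, and, leverage scores being basis-independent, $\ellDelta_j=\norm{e_j^T\delQ}_2^2$. Because $D$ is diagonal, $e_j^T\delQ=e_j^TDQ\,\tilde R^{-1}=D_{jj}\,q_j^T\tilde R^{-1}$, while $\tilde R^T\tilde R=(DQ)^T(DQ)=Q^TD^2Q=I_n+W$ with $W\equiv Q^T(D^2-I_m)Q$, $\norm{W}_2\le\norm{D^2-I_m}_2\le 2\eta+\eta^2$. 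Hence the \emph{exact} identity
$$\ellDelta_j = D_{jj}^2\;q_j^T(I_n+W)^{-1}q_j, \qquad 1\le j\le m,$$
equivalently $\ellDelta_j=(\delQ\delQ^T)_{jj}$ with $\delQ\delQ^T=DQ(I_n+W)^{-1}Q^TD$. This one formula already carries the qualitative content of the theorem: the $j$th row of $\delQ$ is the $j$th row of $Q$, rescaled by $D_{jj}=1+\mathcal{O}(\eta)$ and right-multiplied by the \emph{single} matrix $\tilde R^{-1}=I_n+\mathcal{O}(\eta)$ --- there is no amplification by $\kappa_2(A)$ and no division by $\ell_j$.

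Next I would expand to first order. Using $D_{jj}^2=1+2\zeta_j\eta_j+\zeta_j^2\eta_j^2$, the identity $(I_n+W)^{-1}=I_n-W+W^2(I_n+W)^{-1}$, and $\norm{(I_n+W)^{-1}}_2=\norm{(Q^TD^2Q)^{-1}}_2\le(1-\eta)^{-2}$, one obtains
$$\ellDelta_j-\ell_j = 2\,\zeta_j\eta_j\,\ell_j \;-\; q_j^TWq_j \;+\; \mathcal{O}(\eta^2\,\ell_j), \qquad 1\le j\le m,$$
with the $\mathcal{O}(\eta^2\ell_j)$ term uniform in $j$ (its constant depending only on $\eta<1$). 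Since $\norm{q_j}_2^2=\ell_j$ and $\norm{W}_2\le 2\eta+\eta^2$ --- a bound completely free of $\kappa_2(A)$, which is the whole point of a \emph{row}-scaling --- we have $|q_j^TWq_j|\le\norm{W}_2\,\ell_j\le(2\eta+\eta^2)\ell_j$, and dividing by $\ell_j$ gives $|\ellDelta_j-\ell_j|/\ell_j\le 2\eta_j+2\eta+\mathcal{O}(\eta^2)$. This already implies the asserted bound since $\sqrt2\,n\ge 1$; the factor $\sqrt2\,n$ in the statement is what results if the global term $|q_j^TWq_j|$ is estimated through a Frobenius norm instead ($\norm{W}_F\le n^{1/2}\norm{W}_2$, and $\sr{A}^{1/2}\le n^{1/2}$), in line with the machinery of Theorem~\ref{t_5}.

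The only step needing real care is checking that the second-order remainder is genuinely uniform in $j$ and free of $\kappa_2(A)$ and of $1/\ell_j$: one must verify that neither re-enters through the tail $q_j^TW^2(I_n+W)^{-1}q_j$ nor through the cross term $(D_{jj}^2-1)\,q_j^T\big((I_n+W)^{-1}-I_n\big)q_j$. Both are handled by the observation underlying the leading order: every remainder term has the form $q_j^TMq_j$ with $\norm{q_j}_2^2=\ell_j$ and $\norm{M}_2=\mathcal{O}(\eta)$, the constant depending only on $\norm{W}_2\le 2\eta+\eta^2$ and on $(1-\eta)^{-2}$ --- on nothing involving $A^\dagger$. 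Hence each such term is $\mathcal{O}(\eta^{k}\ell_j)$ for the relevant $k\ge 1$, and after division by $\ell_j$ the bound holds uniformly in $1\le j\le m$.
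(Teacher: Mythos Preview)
Your argument is correct and in fact sharper than the paper's: by writing $A+\delA=DA$ with $D=I_m+Z$ diagonal and computing the \emph{exact} identity $\ellDelta_j=D_{jj}^2\,q_j^T(I_n+W)^{-1}q_j$ with $W=Q^T(D^2-I_m)Q$, you bypass the calculus-based machinery the paper relies on. The paper instead specialises the first-order Taylor expansion of Theorem~\ref{t_7} (smooth QR factorisation $A(t)=Q(t)R(t)$, derivative $\dot R$) to the row-scaled case, obtains $e_j^T\Delta Q=\eta_j\,e_j^TQ+\eta\,e_j^TQ\,\dot R R^{-1}+\mathcal{O}(\eta^2)$, and then bounds $\norm{\dot R R^{-1}}_F$ via $\norm{Q^TDQ}_F\leq\norm{Q^T}_F\norm{DQ}_2$, which is where the factor $\sqrt{2}\,n$ enters. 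Your direct route yields the global term as $|q_j^TWq_j|/\ell_j\le\norm{W}_2\le 2\eta+\eta^2$, so you actually prove the stronger bound $2\eta_j+2\eta+\mathcal{O}(\eta^2)$, from which the stated $2(\eta_j+\sqrt{2}\,n\,\eta)$ follows trivially. What your approach buys is elementarity and a tighter constant; what the paper's approach buys is uniformity of method with Theorems~\ref{t_5} and~\ref{t_6}. One small expository wrinkle: in your final paragraph you write $\norm{M}_2=\mathcal{O}(\eta)$ for the remainder pieces, but the second-order terms you need have $\norm{M}_2=\mathcal{O}(\eta^2)$ (e.g.\ $M=W^2(I_n+W)^{-1}$ or the cross term $(D_{jj}^2-1)\cdot W$); the bounds you actually use are correct, only the summary sentence is imprecise.
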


\begin{proof}
See Section~\ref{s_t8proof}.
\end{proof}

The quantities $\eta_j$ represent the local effects of the individual row-wise perturbations, while 
the factor $n\,\eta$ represents the global effect of all perturbations. 
 In contrast to our previous results,  the bound does not depend on either the condition number 
 or the leverage score magnitude.

\subsubsection*{Numerical experiments: Figure~\ref{f_fig5}}
We illustrate the effect of component-wise row-scaled perturbations on the relative accuracy 
of leverage scores that are computed with a QR decomposition.

\begin{figure}
\begin{center}
\includegraphics[width=5in,height=3.5in]{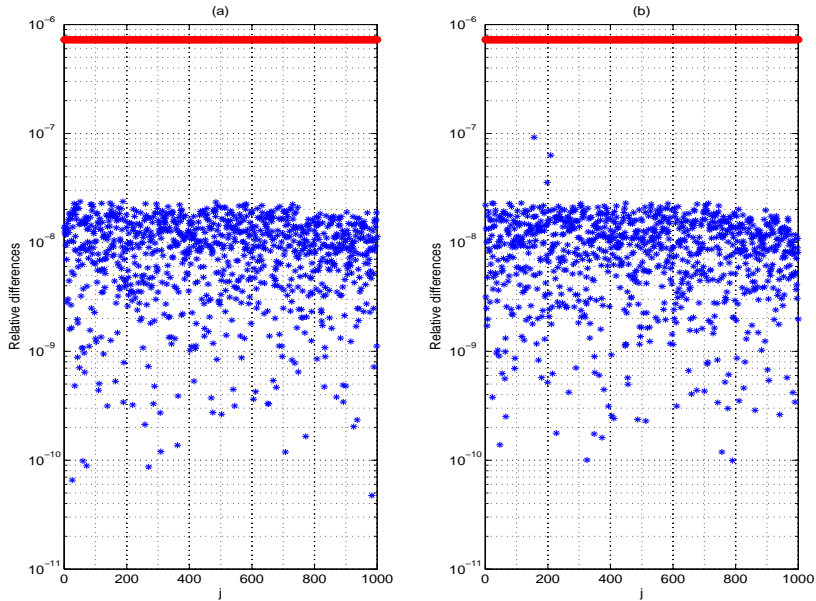}
\end{center}
\caption{Relative leverage score differences $|\ellDelta_j-\ell_j|/\ell_j$ (blue stars) and 
the bound from Theorem~\ref{t_6} (red line above the stars)
vs index~$j$ for component wise row-wise scaled perturbations with $\eta_j=10^{-8}$, 
$1\leq j\leq m$.}\label{f_fig5}
\end{figure}

Figure~\ref{f_fig5} shows the relative leverage score differences from
 a well-conditioned matrix $A$
with $\kappa_2(A)=1$ in (a), and from a worse conditioned matrix $B$ in (\ref{e_B}) 
with $\kappa_2(B)\approx 10^5$ in (b). The component-wise row-scaled
perturbations from (\ref{e_pertrow}) are $\eta=\eta_j=10^{-8}$ for $1\leq j\leq m$.
The leverage scores for these types of matrices are shown in Figure~\ref{f_fig2}. 

Figure~\ref{f_fig5} shows that the relative leverage score differences for both matrices look 
almost the  same, hovering around $10^{-8}$, except for a few outliers.
Thus, the relative accuracy of most leverage scores does not depend on the condition number,
but a few small leverage scores do show a slight effect. 
Note that Theorem~\ref{t_6} is based only on a perturbation analysis, not a round off error analysis
of the QR decomposition, and that we did not take into errors arising in the  
computation of the two norm.

Furthermore, Figure~\ref{f_fig5} shows that the  relative leverage score differences do not depend on the leverage score magnitude. Hence Theorem~\ref{t_6} captures the relative 
 leverage score accuracy under  component-wise row-scaled perturbations.

\section{Summary}\label{s_sum}
We analyzed the conditioning of individual leverage scores (Section~\ref{s_angles}) and took the first
steps in assessing the numerical stability of QR decompositions for computing
leverage scores (Section~\ref{s_qrpert}).   To this end,
we derived several bounds for the relative accuracy of individual leverage scores.
The bounds are expressed  in terms of principal angles between column spaces, and
three classes  of matrix perturbations:
General norm-wise, norm-wise row-scaled, and component-wise row-scaled.

Since most of the bounds in Section~\ref{s_qrpert} do not exploit the zero structure of 
the upper triangular factor, they are readily extended to polar decompositions as well.

\subsubsection*{Future research}
The next step is to extend the results in Section~\ref{s_comp} to component-wise perturbations 
$|\delA_{jk}|\leq \eta_{jk}|A_{jk}|$, $1\leq j\leq m$, $1\leq k\leq n$. Numerical experiments 
strongly suggest
that leverage scores computed from QR decompositions of such perturbed matrices 
have relative leverage score differences that do not depend on the magnitude of the leverage scores.

The most popular method for computing leverage scores is the singular value decomposition.
The numerical stability of the SVD in this context needs to be investigated, 
and whether the sensitivity of the singular vectors to singular value gaps matters for
leverage score computations.

Another issue is the numerically stable computation of "$k$-leverage scores". These are leverage scores of the best rank~$k$ approximation to $A$ in the two-norm. Determining leverage scores 
from a truncated SVD is necessary when $A$ is (numerically) rank deficient, or when noisy data
are well represented, as in the case of PCA, by only a few dominant singular vectors.

\appendix
\section{Proofs}\label{s_proofs}
We present proofs for the results in Sections \ref{s_angles} and~\ref{s_qrpert}.

\subsection{Proof of Theorem~\ref{t_1}}\label{s_t1proof}
The full column rank of $A$ and $A+\delA$ assures that the leverage scores are
well-defined according to Definition~\ref{d_lev}.

The proof proceeds in three stages: Expressing the perturbed leverage scores $\ellDelta_j$
in terms of the exact leverage scores $\ell_j$; an upper bound for $\ellDelta_j-\ell_j$; and a
lower bound for $\ellDelta_j-\ell_j$.

\paragraph{1. Expressing the perturbed leverage scores in terms of the exact ones}
Consider any orthonormal basis for the column spaces:
Let $A=QX$, where $X$  is nonsingular and $Q^TQ=I_n$.
Similarly, let $A+\delA=\delQ\delX$, where $\delX$ is nonsingular and $\delQ^T\delQ=I_n$. The leverage scores are $\ell_j=\|e_j^TQ\|_2^2$ and $\ellDelta_j=\|e_j^T\delQ\|_2^2$, $1\leq j\leq m$.

With Definition~\ref{d_angles}, rotate to the basis of principal vectors
$Q_1\equiv QU$ and $\delQ_1\equiv \delQ V$, which satisfy $Q_1^T\delQ_1=\Sigma$. Since the
leverage scores are basis independent we can write $\ell_j=\|e_j^TQ_1\|_2^2$ and
$\ellDelta_j=\|e_j^T\delQ_1\|_2^2$, $1\leq j\leq m$.

The goal is to express $\delQ_1$ in terms of $Q_1$. To this end
choose $Q_2$ so that $\mathcal{Q}\equiv \begin{pmatrix}Q_1 &Q_2\end{pmatrix}$ is a $m\times m$
orthogonal matrix. Then $I_m=\mathcal{Q}\mathcal{Q}^T=Q_1Q_1^T+Q_2Q_2^T$ implies
\begin{eqnarray}\label{e_t1a}
\|e_j^TQ_2\|_2^2=e_j^TQ_2Q_2^Te_j=e_j^Te_j-e_j^TQ_1Q_1^Te_j=1-\|e_j^TQ_1\|_2^2=1-\ell_j.
\end{eqnarray}
Write
$$\mathcal{Q}^T\delQ_1=\begin{pmatrix} Q_1^T\delQ_1\\ Q_2^T\delQ_1\end{pmatrix}
=\begin{pmatrix}\Sigma \\ Z\end{pmatrix},\qquad where \quad  Z\equiv Q_2^T\delQ_1.$$
This gives the desired expressions $\delQ_1=Q_1\Sigma +Q_2Z$ and
$$\delQ_1\delQ_1^T=
Q_1\Sigma^2Q_1^T+Q_2Z\Sigma Q_1^T+Q_1\Sigma Z^TQ_2^T +Q_2ZZ^TQ_2^T.$$  
From $\mathcal{Q}^T\delQ_1$ having orthonormal columns follows $Z^TZ=I_n-\Sigma^2$
and 
\begin{eqnarray}\label{e_t1b}
\|Z\|_2=\sqrt{\|I_n-\Sigma^2\|_2}=\sqrt{1-(\cos{\theta_n})^2}=\sin{\theta_n}.
\end{eqnarray}
At last, we can express the perturbed leverage scores in terms of the exact ones,
\begin{eqnarray}\label{e_t1c}
\ellDelta_j &=& e_j^T\delQ_1\delQ_1^Te_j= e_j^TQ_1\>\Sigma^2\>Q_1^Te_j+2 
e_j^TQ_2\>Z\Sigma\> Q_1^Te_j+e_j^TQ_2\>ZZ^T\>Q_2^Te_j
\end{eqnarray}

\paragraph{2. Upper bound}
Applying (\ref{e_t1a}) and (\ref{e_t1b}) in (\ref{e_t1c}) gives
\begin{eqnarray}
\ellDelta_j &\leq&
\|\Sigma\|_2^2\>\ell_j+ 2 \|e_j^TQ_2\|\>\|Z\|_2\|\Sigma\|_2 \>\sqrt{\ell_j}+\|e_j^TQ_2\|_2^2 \|Z\|_2^2 \notag\\
&\leq& (\cos{\theta_1})^2\ell_j+2\cos{\theta_1}\sin{\theta_n}\>\sqrt{\ell_j(1-\ell_j)}+
(\sin{\theta_n})^2\>(1-\ell_j),\label{e_t1d}\\
&=& \left((\cos{\theta_1})^2-(\sin{\theta_n})^2\right)\>\ell_j
+2\cos{\theta_1}\sin{\theta_n}\>\sqrt{\ell_j(1-\ell_j)} +(\sin{\theta_n})^2.\notag
\end{eqnarray}
Subtracting $\ell_j$ on both sides, and omitting the summand with the negative sign gives 
\begin{eqnarray*}
\ellDelta_j-\ell_j &\leq& 
-\left((\sin{\theta_1})^2+(\sin{\theta_n})^2\right)\>\ell_j
+2\cos{\theta_1}\sin{\theta_n}\>\sqrt{\ell_j(1-\ell_j)} +(\sin{\theta_n})^2\\
&\leq &2\cos{\theta_1}\sin{\theta_n}\>\sqrt{\ell_j(1-\ell_j)} +(\sin{\theta_n})^2.
\end{eqnarray*}
If $m=2n$, write (\ref{e_t1d}) as
$$\ellDelta_j\leq \left(\cos{\theta_1}\sqrt{\ell_j}+ \sin{\theta_j}\sqrt{1-\ell_j}\right)^2.$$

\paragraph{3. Lower bound}
Applying (\ref{e_t1a}) and (\ref{e_t1b}) in (\ref{e_t1c}) gives
\begin{eqnarray*}
\ellDelta_j &\geq&
(\cos{\theta_n})^2\>\ell_j- 2 \|e_j^TQ_2\|\>\|Z\|_2\|\Sigma\|_2 \>\sqrt{\ell_j}\\
&=&
(\cos{\theta_n})^2\>\ell_j -2 \>\cos{\theta_1}\sin{\theta_n} \sqrt{\ell_j(1-\ell_j)}.
\end{eqnarray*}
Subtracting $\ell_j$ on both sides gives, and using $\ell_j\leq 1$ gives
\begin{eqnarray*}
\ellDelta_j -\ell_j&\geq&
-(\sin{\theta_n})^2\>\ell_j -2 \>\cos{\theta_1}\sin{\theta_n} \sqrt{\ell_j(1-\ell_j)}\\
&\geq &-(\sin{\theta_n})^2 -2 \>\cos{\theta_1}\sin{\theta_n} \sqrt{\ell_j(1-\ell_j)}.
\end{eqnarray*}

If $m=2n$ then $Z$ is a square matrix, and the smallest eigenvalue of $ZZ^T$ is equal to
$$\lambda_n(ZZ^T)=\lambda_n(Z^TZ)=\lambda_n(I_n-\Sigma^2)=1-(\cos{\theta_1})^2.$$
Applying this in (\ref{e_t1c}) gives
\begin{eqnarray*}
\ellDelta_j &\geq&
(\cos{\theta_n})^2\>\ell_j- 2 \|e_j^TQ_2\|\>\|Z\|_2\|\Sigma\|_2 \>\sqrt{\ell_j}+\left(1-(\cos{\theta_1})^2\right)
\>\|e_j^TQ_2\|_2^2\\
&=& (\cos{\theta_n})^2\>\ell_j -2 \>\cos{\theta_1}\sin{\theta_n} \sqrt{\ell_j(1-\ell_j)}
+\left(1-(\cos{\theta_1})^2\right)\>(1-\ell_j)\\
&=& 1-\left(\sin{\theta_n}\>\sqrt{\ell_j}+\cos{\theta_1}\>\sqrt {1-\ell_j}\right)^2.
\end{eqnarray*}

\subsection{Proof of Theorem~\ref{t_2}}\label{s_t2proof}
The assumption $\|\delA\|_2\>\|A^{\dagger}\|_2<1$ implies that $\rank(A+\delA)=\rank(A)=n$, hence 
the perturbed leverages scores $\ellDelta_j$ are well-defined.

We express $\sin{\theta_n}$ in terms of orthogonal projectors onto the column spaces.
Let $\mathcal{P}$ be the orthogonal projector onto $\range{A}$, and $\tilde{\mathcal{P}}$ the orthogonal
projector onto $\range{A+\delA}$. Then  \cite[Theorem 5.5]{StS90}, \cite[(5.6)]{wedin72} implies
$$\sin{\theta_n}=\|(I_m-\mathcal{P})\tilde{\mathcal{P}}\|_2.$$
From $\mathcal{P}=AA^{\dagger}$ and $\tilde{\mathcal{P}}=(A+\delA)(A+\delA)^{\dagger}$ follows
\begin{eqnarray*}
\sin{\theta_n}&=&\|(I_m-AA^{\dagger})\>(A+\delA)(A+\delA)^{\dagger}\|_2=
\|(I_m-AA^{\dagger})\>\delA\>(A+\delA)^{\dagger}\|_2\\
&\leq& \|(I_m-AA^{\dagger})\>\delA\|_2 \|(A+\delA)^{\dagger}\|_2 =
\|A\|_2\,\|(A+\delA)^{\perp}\|_2\>\epsilon^{\perp}.
\end{eqnarray*}
It remains  to express $\|(A+\delA)^{\dagger}\|_2$ in terms of $\|A^{\dagger}\|_2$. The 
well-conditioning of singular values \cite[Corollary 8.6.2]{GovL13} implies 
$$\|(A+\delA)^{\dagger}\|_2\leq 
\frac{\|A^{\dagger}\|_2}{1-\|\delA\|_2\|A^{\dagger}\|_2}\leq 2\>\|A^{\dagger}\|_2,$$
where the last inequality is due to the assumption
$\|\delA\|_2\|A^{\dagger}\|_2\leq 1/2$.
Substituting this into the bound for $\sin{\theta_n}$ yields
$\sin{\theta_n}\leq 2\> \kappa_2(A)\>\epsilon^{\perp}$.
In turn now, inserting this into Corollary~\ref{c_1}, and bounding 
$\cos{\theta_1}\leq 1$ gives the first bound in Theorem~\ref{t_2}.
The second one follows more easily from $\sin{\theta_n} \leq \kappa_2(A)\>\epsilon$,
see \cite[(4.4)]{wedin83}.

\subsection{Proof of Theorem \ref{t_4sun}}\label{s_t4sunproof}
We start with a special case of Theorem~\ref{t_2} 
applied to $m\times n$ matrices $Q$ and $Q+\Delta Q$ with orthonormal
columns and leverage scores $\ell_j=\|e_j^TQ\|_2^2$ and
$\ellDelta_j=\|e_j^T(Q+\Delta Q)\|_2^2$. Since $\|Q\|_2=\kappa_2(Q)=1$, we obtain
\begin{eqnarray}\label{e_pertq4}
\frac{|\ellDelta_j - \ell_j|}{\ell_j}  \leq \left( 2\>\sqrt{\frac{1-\ell_j}{\ell_j}} + 
 \frac{\|\Delta Q\|_2}{\ell_j}\right)\>\|\Delta Q\|_2,\qquad 1\leq j\leq m.
 \end{eqnarray}

The bound for $\|\Delta Q\|_2\leq \|\Delta Q\|_F$ is obtained from 
a simpler version of the  lemma below. 

\begin{lemma}[Theorem 1.6 in \cite{Sun1991}]\label{l_4sun}
Let $A$ and $\delA$ be real $m \times n$ matrices with $\rank(A)=n$, and
$\norm{A^\dagger}_2\norm{\delA}_2 < 1$.  If $A + \delA = (Q + \Delta Q)\,\tilde{R}$ is the 
thin QR decomposition, then
$$\norm{\Delta Q}_F \leq \frac{1+\sqrt{2}}{1-\norm{A^\dagger}_2\norm{\delA}_2}\>\|A^{\dagger}\|_2
\norm{\delA}_F.$$
\end{lemma}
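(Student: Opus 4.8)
The plan is to split $\Delta Q$ into its components in $\range{A}$ and in the orthogonal complement, estimate each separately, and use the upper-triangular structure of the $R$-factors to control the in-subspace part. Fix thin QR decompositions $A=QR$ and $A+\delA=(Q+\Delta Q)\,\tilde{R}$ with $R,\tilde{R}$ upper triangular; the hypothesis $\norm{A^{\dagger}}_2\norm{\delA}_2<1$ forces $\sigma_{\min}(A+\delA)>0$, so $\rank(A+\delA)=n$ and both decompositions exist. Let $P\equiv QQ^T$ be the orthogonal projector onto $\range{A}$, and set $G\equiv Q^T(Q+\Delta Q)-I_n$ and $W\equiv(I_m-P)(Q+\Delta Q)$. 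Since $Q$ has orthonormal columns and $(I_m-P)Q=0$, we get $\Delta Q=QG+W$, hence $\norm{\Delta Q}_F^2=\norm{G}_F^2+\norm{W}_F^2$, and it suffices to bound $\norm{G}_F$ and $\norm{W}_F$.

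For the orthogonal-complement part, $Q+\Delta Q=(A+\delA)\,\tilde{R}^{-1}$ together with $(I_m-P)A=0$ gives $W=(I_m-P)\,\delA\,\tilde{R}^{-1}$, so $\norm{W}_F\le\norm{(I_m-P)\delA}_F\,\norm{\tilde{R}^{-1}}_2$. Because $Q+\Delta Q$ has orthonormal columns, $\norm{\tilde{R}^{-1}}_2=\norm{(A+\delA)^{\dagger}}_2$, and the standard perturbation bound for the smallest singular value (\cite[Corollary 8.6.2]{GovL13}) yields $\norm{(A+\delA)^{\dagger}}_2\le\beta$, where $\beta\equiv\norm{A^{\dagger}}_2/(1-\norm{A^{\dagger}}_2\norm{\delA}_2)$. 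Thus $\norm{W}_F\le\beta\,\norm{(I_m-P)\delA}_F$.

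For the in-subspace part, $Q^TA=R$ gives the identity $G=(Q^T\delA-\delR)\,\tilde{R}^{-1}$ with $\delR\equiv\tilde{R}-R$. Since $\delR$ and $\tilde{R}^{-1}$ are upper triangular, so is their product, and therefore $\mathrm{stril}(G)$, the strictly lower-triangular part of $G$, equals $\mathrm{stril}(Q^T\delA\,\tilde{R}^{-1})=\mathrm{stril}(Q^TP\,\delA\,\tilde{R}^{-1})$, so $\norm{\mathrm{stril}(G)}_F\le\beta\,\norm{P\delA}_F$. Independently, $(Q+\Delta Q)^T(Q+\Delta Q)=I_n$ forces $(I_n+G)^T(I_n+G)=I_n-W^TW$, that is, $G+G^T=-(G^TG+W^TW)$, so the symmetric part $G_S\equiv\tfrac12(G+G^T)$ obeys $\norm{G_S}_F\le\tfrac12(\norm{G}_F^2+\norm{W}_F^2)$ and is second order in $\delA$. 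Writing $G=G_S+G_A$ with $G_A$ antisymmetric, one has $\norm{G_A}_F=\sqrt{2}\,\norm{\mathrm{stril}(G_A)}_F$ and $\mathrm{stril}(G_A)=\mathrm{stril}(G)-\mathrm{stril}(G_S)$, whence $\norm{G}_F\le(1+\sqrt{2})\norm{G_S}_F+\sqrt{2}\,\norm{\mathrm{stril}(G)}_F$.

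Assembling the pieces and using $\norm{P\delA}_F^2+\norm{(I_m-P)\delA}_F^2=\norm{\delA}_F^2$, the quantity $\norm{\Delta Q}_F^2=\norm{G}_F^2+\norm{W}_F^2$ is, to first order, at most $2\norm{\mathrm{stril}(G)}_F^2+\norm{W}_F^2\le\beta^2\bigl(2\norm{P\delA}_F^2+\norm{(I_m-P)\delA}_F^2\bigr)\le 2\beta^2\norm{\delA}_F^2$, so $\norm{\Delta Q}_F\lesssim\sqrt{2}\,\beta\,\norm{\delA}_F$, comfortably inside the asserted constant $1+\sqrt{2}$. The main obstacle is making this rigorous over the whole admissible range $\norm{A^{\dagger}}_2\norm{\delA}_2<1$: the quadratic remainders $G^TG$ and $W^TW$ turn the combination above into a quadratic inequality for $\norm{G}_F$ that a naive substitution only resolves for sufficiently small $\norm{\delA}_2$. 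To obtain the clean statement I would follow the continuation argument of \cite{Sun1991}, applying the first-order estimate along the path $A+t\,\delA$, $t\in[0,1]$, and bootstrapping; the slack between $\sqrt{2}$ and $1+\sqrt{2}$, together with the denominator $1-\norm{A^{\dagger}}_2\norm{\delA}_2$ in $\beta$, is exactly what absorbs the higher-order terms.
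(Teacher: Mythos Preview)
The paper does not prove this lemma; it simply quotes Theorem~1.6 of Sun~\cite{Sun1991} and uses it as a black box to derive Theorem~\ref{t_4sun}. So there is no in-paper proof to compare against, and your write-up is really a reconstruction of Sun's result.

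That said, your first-order analysis is sound. The splitting $\Delta Q=QG+W$ with $G=Q^T(Q+\Delta Q)-I_n$ and $W=(I_m-P)(Q+\Delta Q)$ is correct, the identity $G=(Q^T\delA-\delR)\tilde R^{-1}$ and the observation that $\delR\,\tilde R^{-1}$ is upper triangular do give $\mathrm{stril}(G)=\mathrm{stril}(Q^T\delA\,\tilde R^{-1})$, and the orthonormality relation $G+G^T=-(G^TG+W^TW)$ is right. Your inequality $\norm{G}_F\le(1+\sqrt{2})\norm{G_S}_F+\sqrt{2}\,\norm{\mathrm{stril}(G)}_F$ is also correct. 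This route is more algebraic than Sun's, which works infinitesimally: he differentiates $A(t)=Q(t)R(t)$ along the path $A+t\,\delA$, obtains $\norm{\dot Q(t)}_F\le(1+\sqrt{2})\,\norm{R(t)^{-1}}_2\,\norm{\dot A}_F$ directly from the differential identity (this is essentially the computation in the proof of Theorem~\ref{t_7} here, where the $\sqrt{2}$ comes from the $\myup$ operator bound and the extra $1$ from the raw $\delA\,R^{-1}$ term), bounds $\norm{R(t)^{-1}}_2\le\norm{A^{\dagger}}_2/(1-\norm{A^{\dagger}}_2\norm{\delA}_2)$ uniformly in $t$, and integrates.

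Where your sketch is incomplete you have diagnosed correctly: the quadratic feedback $G+G^T=-(G^TG+W^TW)$ prevents closing the estimate algebraically over the full range $\norm{A^{\dagger}}_2\norm{\delA}_2<1$, and you need exactly the path/integration device you allude to. Note, however, that once you fall back on the differential argument, your algebraic splitting becomes redundant: Sun's infinitesimal bound already delivers the constant $1+\sqrt{2}$ without ever forming $G$ and $W$, so the ``slack between $\sqrt{2}$ and $1+\sqrt{2}$'' is not what absorbs the higher-order terms---rather, the differential bound is exact at each $t$ and the denominator $1-\norm{A^{\dagger}}_2\norm{\delA}_2$ comes solely from the uniform control of $\norm{A(t)^{\dagger}}_2$. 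If you want a self-contained proof, it is cleaner to drop the algebraic first pass and run Sun's differential argument from the start.
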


Below  is a simpler but not much more restrictive version of Lemma~\ref{l_4sun}.
If $\|\delA\|_2\|A^{\dagger}\|_2\leq 1/2$, then
\begin{eqnarray*}
\norm{\Delta Q}_F \leq 6\>\|A^{\dagger}\|_2\norm{\delA}_F = 6 \,\sr{A}^{1/2}\,\kappa_2(A)\>\epsilon_F.
\end{eqnarray*}
Substituting this into (\ref{e_pertq4}) gives Theorem~\ref{t_4sun}.

\subsection{Proof of Theorem \ref{t_5}}\label{s_t5proof}
We start with a simplified version of Theorem~\ref{t_2}.
Let $Q$ and $Q+\Delta Q$ be $m\times n$ matrices with orthonormal columns, and
$\ell_j=\|e_j^TQ\|_2^2$ and $\ellDelta_j=\|e_j^T(Q+\Delta Q)\|_2^2$, $1\leq j\leq m$, their leverage
scores. Multiplying out the inner product in $\ellDelta_j$ and using triangle and submultiplicative
inequalities gives
\begin{eqnarray}\label{e_pertq}
\frac{|\ellDelta_j-\ell_j|}{\ell_j} \leq 2 \> \frac{\|e_j^T\Delta Q\|_2}{\sqrt{\ell_j}}+
\frac{\|e_j^T\Delta Q\|_2^2}{\ell_j}, \qquad 1\leq j\leq m.
\end{eqnarray}
Next we derive bounds for $\|e_j^T\Delta Q\|_2$ in terms of $\delA$.
To this end we represent the perturbed matrix by a function $A(t)$, 
with a smooth decomposition $A(t) = Q(t)R(t)$. 

This is a very common approach, see for instance
\cite[Section 3]{Chang2012},
\cite[Section 4]{CP2001},
\cite[Section 3]{CPS1997},
\cite[Section 5]{CS2010},
\cite[Section 2.1]{DE1999}
\cite[Section 2.4]{Higham2002},
\cite[Section 3]{Stewart1977}, 
\cite[Section 2]{Sun1991}, \cite[Section 4]{Sun1992}, \cite[Section 5]{Sun1995}, and
\cite[Section]{Zha1993}.

Define the function
$$ A(t)\equiv A+\frac{t}{\epsilon_F}\>\delA, \qquad 
0\leq t\leq\epsilon_F\equiv\frac{\|\delA\|_F}{\|A\|_F}.$$
Let $A(t)=Q(t)R(t)$ be a thin QR decomposition, where we set
$Q\equiv Q(0)$, $R\equiv R(0)$, $Q+\Delta Q\equiv Q(\epsilon_F)$ and 
$R+\Delta R \equiv R(\epsilon_F)$.
The derivative of $R$ with regard to $t$ is $\dot{R}$. 

\begin{theorem}\label{t_7}
Let $A$ and $A+\delA$ be real $m \times n$ matrices with $\rank(A)=n$ and 
$\|\delA\|_2\|A^{\dagger}\|_2<1$.  Then 
$$\Delta Q =\delA\,R^{-1} - \epsilon_F\>Q\,\dot{R}\,R^{-1}+\mathcal{O}(\epsilon_F^2), $$
where 
$\|\dot{R}\,R^{-1}\|_F\leq \sqrt{2}\, \sr{A}^{1/2} \,\kappa_2(A)$.
 \end{theorem}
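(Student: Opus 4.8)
The plan is to differentiate the smooth thin QR factorization $A(t)=Q(t)R(t)$, read off $\Delta Q$ from a first-order Taylor expansion at $t=0$, and then bound $\dot R\,R^{-1}$ by exploiting the triangular structure of $R$.

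First I would note that $A(t)$ has full column rank for every $t\in[0,\epsilon_F]$, since $\norm{A(t)-A}_2=\tfrac{t}{\epsilon_F}\norm{\delA}_2\le\norm{\delA}_2<1/\norm{A^\dagger}_2$. With the normalization that $R(t)$ have positive diagonal, the thin QR factorization is then unique and analytic in $t$ (as in the references cited above), so $Q(t)$ and $R(t)$ are differentiable at $t=0$; write $\dot Q\equiv\dot Q(0)$, $\dot R\equiv\dot R(0)$. Taylor's theorem gives $\Delta Q=Q(\epsilon_F)-Q(0)=\epsilon_F\,\dot Q+\mathcal{O}(\epsilon_F^2)$. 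Differentiating $A(t)=Q(t)R(t)$ and using $\dot A(t)\equiv\tfrac1{\epsilon_F}\delA$ gives, at $t=0$, $\tfrac1{\epsilon_F}\delA=\dot Q\,R+Q\,\dot R$, hence $\dot Q=\tfrac1{\epsilon_F}\delA\,R^{-1}-Q\,\dot R\,R^{-1}$. Multiplying by $\epsilon_F$ and substituting into the Taylor expansion yields $\Delta Q=\delA\,R^{-1}-\epsilon_F\,Q\,\dot R\,R^{-1}+\mathcal{O}(\epsilon_F^2)$, which is the first claim.

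For $\norm{\dot R\,R^{-1}}_F$, the key facts are that $\dot R\,R^{-1}$ is upper triangular (a product of upper triangular matrices), while differentiating $Q(t)^TQ(t)=I_n$ shows that $W\equiv Q^T\dot Q$ is skew-symmetric. Left-multiplying $\tfrac1{\epsilon_F}\delA=\dot Q\,R+Q\,\dot R$ by $Q^T$ and then right-multiplying by $R^{-1}$ gives $M\equiv\tfrac1{\epsilon_F}\,Q^T\delA\,R^{-1}=W+\dot R\,R^{-1}$, the sum of a skew-symmetric matrix and an upper triangular one. Forming $M+M^T$ cancels $W$, so $M+M^T=\dot R\,R^{-1}+(\dot R\,R^{-1})^T$; comparing Frobenius norms entrywise — the diagonal of $\dot R\,R^{-1}$ enters $\norm{M+M^T}_F^2$ with weight $4$ and the strictly upper part with weight $2$ — gives $\norm{\dot R\,R^{-1}}_F^2\le\tfrac12\norm{M+M^T}_F^2\le2\norm{M}_F^2$, i.e. $\norm{\dot R\,R^{-1}}_F\le\sqrt2\,\norm{M}_F$. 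Finally, since $A=QR$ with $Q^TQ=I_n$ and $R$ invertible we have $A^\dagger=R^{-1}Q^T$ and $\norm{R^{-1}}_2=\norm{A^\dagger}_2$, so
$$\norm{M}_F\le\norm{Q^T}_2\,\tfrac1{\epsilon_F}\norm{\delA}_F\,\norm{R^{-1}}_2=\norm{A}_F\,\norm{A^\dagger}_2=\sr{A}^{1/2}\,\kappa_2(A),$$
and therefore $\norm{\dot R\,R^{-1}}_F\le\sqrt2\,\sr{A}^{1/2}\,\kappa_2(A)$.

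I expect the main obstacle to be this middle step: extracting the clean constant $\sqrt2$ from the splitting $M=W+\dot R\,R^{-1}$, which hinges on the interplay between upper-triangular and skew-symmetric matrices. The remaining ingredients — uniqueness and analyticity of the factorization, the Taylor expansion, and the elementary identities $\norm{R^{-1}}_2=\norm{A^\dagger}_2$ and $\norm{A}_F=\sr{A}^{1/2}\norm{A}_2$ — are routine.
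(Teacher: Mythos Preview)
Your proof is correct and follows essentially the same route as the paper's. The only cosmetic difference is in how you reach the identity $\dot R R^{-1}+(\dot R R^{-1})^T=M+M^T$: the paper differentiates $A(t)^TA(t)=R(t)^TR(t)$ and then multiplies by $R^{-T}$ and $R^{-1}$, whereas you obtain it from the skew-symmetry of $Q^T\dot Q$; both derivations are standard and equivalent, and the subsequent $\sqrt{2}$ bound and the final identification $\norm{R^{-1}}_2=\norm{A^\dagger}_2$, $\norm{\delA}_F/\epsilon_F=\norm{A}_F=\sr{A}^{1/2}\norm{A}_2$ are identical to the paper's.
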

\smallskip

\begin{proof}
The proof is inspired in particular by \cite[Section~4]{CP2001} and \cite[Section 2.4]{Higham1986}.

\paragraph{Smooth decomposition}
From $\rank(A)=n$, $\|\tfrac{t}{\epsilon_F}\,\delA\|_2\leq \|\delA\|_2$ for $0\leq t\leq \epsilon_F$, and
$\|\delA\|_2\|A^{\dagger}\|_2<1$ follows $\rank(A(t))=n$.
Furthermore, since $A(t)$ has at least two continuous derivatives,
so do $Q(t)$ and $R(t)$ \cite[Proposition 2.3]{DE1999}. 

\paragraph{Expression for $\Delta Q$}
The existence of two derivatives allows us to take a Taylor expansion of $Q(t)$ around $t=0$, and get
$Q(t)-Q(0)=t\,\dot{Q}(0) +\mathcal{O}(t^2)$. Evaluating at $t=\epsilon_F$ gives
\begin{eqnarray}\label{e_t7a}
\Delta Q =(Q+\Delta Q)-Q=Q(\epsilon_F)-Q(0)=\epsilon_F\, \dot{Q} + \mathcal{O}(\epsilon_F^2).
\end{eqnarray}
To get an expression for $\dot{Q}$, differentiate $A(t)=Q(t)R(t)$,
$$\frac{\delA}{\epsilon_F} =\dot{Q}(t)\,R(t) +Q(t)\,\dot{R}(t),$$
and evaluate at $t=0$,
$$\dot{Q} = \frac{\delA}{\epsilon_F}\,R^{-1}  - Q\,\dot{R}\,R^{-1}.$$
Inserting the above into (\ref{e_t7a}) gives the expression for $\Delta Q$ in Theorem~\ref{t_5}.
\smallskip

\paragraph{Bound for $\|\dot{R}R^{-1}\|_F$}
Differentiating $A(t)^TA(t)=R(t)^TR(t)$ gives 
$$\frac{1}{\epsilon_F}\>\left( (\delA)^TA +  A^T\delA +\frac{2t}{\epsilon}\,
(\delA)^T\delA\right) = \dot{R}(t)^TR(t)+R(t)^T\dot{R}(t),$$
and evaluating at $t=0$ yields
\begin{eqnarray*}
\frac{1}{\epsilon_F}\> \left( (\delA)^TA +  A^T\delA\right) = \dot{R}^TR+R^T\dot{R}.
\end{eqnarray*} 
Multiplying by $R^{-T}$ on the left and by $R^{-1}$ on the right gives
\begin{equation}\label{e_t7c}
\dot{R}R^{-1} + \left( \dot{R}R^{-1} \right)^T = \frac{1}{\epsilon_F}\> \left(Q^T\delA R^{-1} + 
\left( Q^T\delA R^{-1} \right)^T\right).
\end{equation}
Now we take advantage of the fact that $\dot{R}R^{-1}$ is upper triangular, and define 
a function that extracts the upper triangular part of a square matrix $Z$ via
$$\myup(Z) \equiv \frac{1}{2}\mathrm{diagonal}(Z)+\mathrm{strictly~upper~triangular~part}(Z).$$
Applying the function to (\ref{e_t7c}) gives 
$$\dot{R}R^{-1}  = \frac{1}{\epsilon_F}\>\myup\left( Q^T\delA R^{-1} + 
\left( Q^T\delA R^{-1} \right)^T\right).$$ 
Taking norms yields \cite[Equation (3.5)]{CP2001}
\begin{eqnarray}\label{e_t7d}
\norm{\dot{R}R^{-1}}_F &\leq &\frac{\sqrt{2}}{\epsilon_F}\>\norm{Q^T\delA  R^{-1}}_F\label{e_rr}\\
&\leq&\frac{\sqrt{2}}{\epsilon_F}\>\norm{\delA}_F  \|R^{-1}\|_2
=\sqrt{2}\>\sr{A}^{1/2}\,\kappa_2(A).\notag
\end{eqnarray}
\end{proof}

Now we are ready to derive a bound for the row norms of $\Delta Q$. Combining the
two bounds from Theorem~\ref{t_7}, that is, inserting
$\|\dot{R}\,R^{-1}\|_F\leq \sqrt{2}\, \sr{A}^{1/2} \,\kappa_2(A)$ into
$$\norm{e_j^T\Delta Q}_2 \leq \norm{e_j^T\delA}_2\norm{A^{\dagger}}_2 + \epsilon_F\,\sqrt{\ell_j}\norm{\dot{R}R^{-1}}_2 + \mathcal{O}(\epsilon^2), \qquad 1\leq j\leq m,$$
gives 
$$\norm{e_j^T\Delta Q}_2 \leq \norm{e_j^T\delA}_2 \|A^{\dagger}\|_2 + 
\sqrt{2\,\ell_j}\,\sqrt{\sr{A}}\>\epsilon_F \>\kappa_2(A) + \mathcal{O}(\epsilon_F^2).$$
Into the first summand substitute
\begin{eqnarray*}
\|e_j^T\delA\|_2=\epsilon_j\,\|e_j^TA\|_2\leq \epsilon_j\,\|e_j^TQ\|_2\,\|R\|_2=
\epsilon_j\,\sqrt{\ell_j}\,\|A\|_2,
\end{eqnarray*}
and obtain
$$\|e_j^T\Delta Q\|_2
\leq\sqrt{\ell_j}\>\left(\epsilon_j + \sqrt{2}\,\sqrt{\sr{A}}\>\epsilon_F \right)\>\kappa_2(A) + 
\mathcal{O}(\epsilon_F^2), \qquad 1\leq j\leq m.$$
Inserting the above into (\ref{e_pertq}) and focussing on the first order terms in $\epsilon_F$
gives Theorem~\ref{t_5}.

\subsection{Proof of Theorem~\ref{t_6}}\label{s_t6proof}
To remove the contribution of $\delA$ in $\range{A}$,
let $\mathcal{P}\equiv AA^{\dagger}$ be the orthogonal projector
onto $\range{A}$, and $\mathcal{P}^{\perp}\equiv I_m-\mathcal{P}$ the orthogonal projector
onto $\range{A}^{\perp}$. Extracting the contribution in $\range{A}$ gives
$$A + \delA = A + \mathcal{P}\,\delA + \mathcal{P}^{\perp}\,\delA 
=\left(A+ \mathcal{P}\,\delA\right) +\mathcal{P}^{\perp}\,\delA = M +\Delta M,$$
where $M\equiv A+\mathcal{P}\,\delA$ and $\Delta M\equiv \mathcal{P}^{\perp}\>\delA$.

\paragraph{Leverage scores}
Here $\rank(M)=n$, because $\mathcal{P}$ is an orthogonal projector, so that
$\|\mathcal{P}\,\delA\|_2\|A^{\dagger}\|_2\leq \|\delA\|_2\|A^{\dagger}\|_2<1$.
With $M=\mathcal{P}\>(A+\delA)$ this implies $\range{M}=\range{A}$.
Furthermore $\rank(M+\Delta M)=\rank(A+\delA)=n$.
Thus $M$ and $M+\Delta M$ have thin QR decompositions $M=QX$ and 
$M+\Delta M=(Q+\Delta Q)\tilde{X}$, and 
have the same leverage scores $\ell_j$ and $\ellDelta_j$, respectively, as $A$ and
$A+\delA$.

Ultimately, we want to apply Theorem~\ref{t_5}
to $M$ and $M+\Delta M$, but the perturbation
$\Delta M=\mathcal{P}^{\perp}\,\delA$ is to be related to $A$ rather than to $M$,
and the bound is to be expressed in terms of $\kappa_2(A)$ rather than $\kappa_2(M)$.

\paragraph{Applying Theorem~\ref{t_7}} 
With 
$$M(t)\equiv M+\tfrac{t}{\mu}\Delta M, \qquad 0\leq t\leq 
\mu\equiv \frac{\|\Delta M\|_F}{\|A\|_F}=\epsilon^{\perp}_F,$$
Theorem~\ref{t_7} implies 
\begin{eqnarray}\label{e_t7e}
\Delta Q =\Delta M\,X^{-1} - \mu\>Q\,\dot{X}\,X^{-1}+\mathcal{O}(\mu^2).
\end{eqnarray}
To bound $\|\dot{X}X^{-1}\|_F$, we apply (\ref{e_t7d}) and obtain
\begin{eqnarray}\label{e_t7f}
\|\dot{X}\,X^{-1}\|_F\leq \frac{\sqrt{2}}{\mu}\>\|\Delta M\|_F\,\|X^{-1}\|_2.
\end{eqnarray}
\paragraph{Bounding $\|e_j^T\Delta Q\|_2$}
Combining (\ref{e_t7e}) and (\ref{e_t7f}) gives
\begin{eqnarray*}
\|e_j^T\Delta Q\|_2&\leq& \left(\|e_j^T\Delta M\|_2+\sqrt{2}\,\|e_j^TQ\|_2\,\|\Delta M\|_F\right)\>
\|X^{-1}\|_2+\mathcal{O}(\mu^2), \qquad 1\leq j\leq m\\
&=& \left(\epsilon^{\perp}_j\,\frac{\|e_j^TA\|_2}{\|A\|_2} +
\sqrt{2}\,\sqrt{\ell_j}\,\mu\,\sr{A}^{1/2}\right)\>\|A\|_2\|M^{\dagger}\|_2+\mathcal{O}(\mu^2).
\end{eqnarray*}
From $\|e_j^TA\|_2\leq \|e_j^TQ\|_2\|A\|_2=\sqrt{\ell_j}\,\|A\|_2$ follows
\begin{eqnarray}\label{e_t7g}
\|e_j^T\Delta Q\|_2&\leq & \sqrt{\ell_j}\>\left(\epsilon^{\perp}_j +
\sqrt{2}\,\mu\,\sr{A}^{1/2}\right)\>\|A\|_2\|M^{\dagger}\|_2+\mathcal{O}(\mu^2).
\end{eqnarray}
It remains  to express $\|M^{\dagger}\|_2$ in terms of $\|A^{\dagger}\|_2$. The 
well-conditioning of singular values \cite[Corollary 8.6.2]{GovL13} applied to 
$M=A+Z$, where $Z\equiv \mathcal{P}\,\delA$, implies 
$$\|M^{\dagger}\|_2=\|(A+Z)^{\dagger}\|_2\leq 
\frac{\|A^{\dagger}\|_2}{1-\|Z\|_2\|A^{\dagger}\|_2}\leq 2\>\|A^{\dagger}\|_2,$$
where the last inequality is due to the assumption
$\|Z\|_2\|A^{\dagger}\|_2\leq \|\delA\|_2\|A^{\dagger}\|_2\leq 1/2$.
Inserting this bound for $\|M^{\dagger}\|_2$ into (\ref{e_t7g}) yields
\begin{eqnarray*}
\|e_j^T\Delta Q\|_2&\leq & 2\,\sqrt{\ell_j}\>\left(\epsilon^{\perp}_j +
\sqrt{2}\,\mu\,\sr{A}^{1/2}\right)\>\kappa_2(A)+\mathcal{O}(\mu^2), \qquad 1\leq j\leq m.
\end{eqnarray*}
At last, substituting the above into (\ref{e_pertq})
and focussing on the first order terms in $\mu=\epsilon^{\perp}_F$ gives Theorem~\ref{t_6}. 

\subsection{Proof of Theorem~\ref{t_8}}\label{s_t8proof}
Write the perturbations (\ref{e_pertrow}) as $\delA = DA$, where $D$ is a diagonal matrix with
diagonal elements $D_{jj}=\zeta_j\eta_j$, $1\leq j\leq m$. By assumption
$\|\delA\|_2\|A^{\dagger}\|_2\leq \eta\>\kappa_2(A)<1$, so that $\rank(A+\delA)=n$.

As in the proof of Theorem~\ref{t_5}, we start with  (\ref{e_pertq}). To derive bounds
for $\|e_j^T\Delta Q\|_2$ in terms of $\eta_j$ and $\eta$,
represent the perturbed matrix by  
$$A(t)\equiv A+\frac{t}{\eta}\>\delA, \qquad  0\leq t\leq\eta.$$
Let $A(t)=Q(t)R(t)$ be a thin QR decomposition, where 
$Q\equiv Q(0)$, $R\equiv R(0)$, $Q+\Delta Q\equiv Q(\eta)$ and 
$R+\Delta R \equiv R(\eta)$. The derivative of $R$ with respect to $t$ is $\dot{R}$.

Theorem~\ref{t_7} implies 
$$\Delta Q =\delA\,R^{-1} - \epsilon\>Q\,\dot{R}\,R^{-1}+\mathcal{O}(\eta^2)
=DQ-Q\,\dot{R}\,R^{-1}+\mathcal{O}(\eta^2).$$
With $\delA =DA$ this gives
$$e_j^T\,\Delta Q = \eta_j\> e_j^TQ + \eta \>e_j^TQ\,\dot{R}\,R^{-1} + \mathcal{O}(\eta^2),
\qquad 1\leq j \leq m.$$
Taking norms gives 
\begin{eqnarray}\label{e_pertq3}
\|e_j^T\,\Delta Q\|_2\leq \sqrt{\ell_j}\>\left(\eta_j +\eta \>\|\dot{R}\,R^{-1}\|_2\right)+
\mathcal{O}(\eta^2), \qquad 1\leq j\leq m.
\end{eqnarray}
From (\ref{e_rr}) follows
\begin{eqnarray*}
\|\dot{R}\,R^{-1}\|_2&\leq &\norm{\dot{R}R^{-1}}_F 
\leq \frac{\sqrt{2}}{\eta}\>\norm{Q^T\delA  R^{-1}}_F
=\frac{\sqrt{2}}{\eta}\>\norm{Q^TDQ}_F\\
&\leq & \frac{\sqrt{2}}{\eta}\>\norm{Q^T}_F\>\norm{DQ}_2\leq \sqrt{2}n.
\end{eqnarray*}
Combining this with (\ref{e_pertq3}) yields
$$\|e_j^T\,\Delta Q\|_2\leq \sqrt{\ell_j}\>\left(\eta_j +\sqrt{2}\,n\>\eta\right)+
\mathcal{O}(\eta^2), \qquad 1\leq j\leq m.$$
Inserting the above into (\ref{e_pertq}) and focussing on the first order terms in $\eta$
gives Theorem~\ref{t_8}.

%\bibliography{pert}

\end{document}